\documentclass{amsart}
\usepackage{graphicx}
\usepackage{MnSymbol}
\vfuzz2pt 
\hfuzz2pt 
\newtheorem{thm}{Theorem}[section]

\newtheorem{lem}[thm]{Lemma}
\newtheorem{prop}[thm]{Proposition}
\theoremstyle{definition}
\newtheorem{defn}[thm]{Definition}
\theoremstyle{remark}
\newtheorem{rem}[thm]{Remark}
\theoremstyle{remark}

\numberwithin{equation}{section}

\newcommand{\Real}{\mathbb R}

\DeclareMathOperator{\R}{\mathbb{R}}
\usepackage{xcolor}


\begin{document}

\title{Equivalent Moser type inequalities in $\mathbb{R}^2$ and the zero mass case}
\author{D. Cassani \and F. Sani \and C. Tarsi}%
\address{Daniele Cassani, Dip.~di Scienza e Alta Tecnologia, Universit\`a degli Studi dell'Insubria, via Valleggio 11, 22100 Como - ITALY}%
\email{Daniele.Cassani@uninsubria.it}
\address{Federica Sani and Cristina Tarsi, Dip.~di Matematica ``F.~Enriques'', Universit\`a degli Studi di Milano, via C.~Saldini 50, 20133 Milano - ITALY}
\email{Federica.Sani@unimi.it; Cristina.Tarsi@unimi.it}
\thanks{Corresponding author: Daniele.Cassani@uninsubria.it}
\subjclass[2010]{46E35, 35J20, 35Q55}%
\keywords{Trudinger-Moser inequalities, unbounded domains, critical growth, lack of compactness, best constants, Lorentz-Zygmund spaces}%

\date{\today}%
\begin{abstract}
We first investigate concentration and vanishing phenomena concerning Moser type inequalities in the whole plane which involve complete and reduced Sobolev norms. In particular we show that the critical Ruf inequality is equivalent to an improved version of the subcritical Adachi-Tanaka inequality which we prove to be attained. Then, we consider the limiting space $\mathcal{D}^{1,2}(\mathbb{R}^2)$, completion of smooth compactly supported functions with respect to the Dirichlet norm $\|\nabla\cdot\|_2$, and we prove an optimal Lorentz-Zygmund type inequality with explicit extremals and from which can be derived classical inequalities in $H^1(\mathbb{R}^2)$ such as the Adachi-Tanaka inequality and a version of Ruf's inequality.
\end{abstract}
\maketitle
\section{Introduction}
The classical Moser inequality \cite{M} for $u\in H_0^1(\Omega)$, where $\Omega\subset \mathbb R^2$ is a bounded domain, states
\begin{equation}\label{Mi}
\sup_{\|\nabla u\|_2\leq 1}\int_{\Omega}\left(e^{\beta
u^2}-1\right)~dx\leq M(\beta)|\Omega|,\qquad \beta\leq 4\pi
\end{equation}
where the constant $M(\beta)$ stays bounded provided $\beta\leq 4\pi$ and the supremum becomes infinity when $\beta>4\pi$. Moreover, the functional in \eqref{Mi} is compact as long as $\beta<4\pi$, see \cite{DT,PLL}, and in this context the threshold $\beta=4\pi$ plays the role of the critical Sobolev exponent $2^*:=2N/(N-2)$ which yields in higher dimension $N\geq 3$, see \cite{Ta}, the maximal degree of summability as well as the endpoint of compact embeddings in $L^p(\Omega)$ for functions with membership in $H^1_0(\Omega)$. From the point of view of existence and nonexistence of solutions to PDE, differently from the Sobolev case, the exponent $\beta$ in \eqref{Mi} does not play any role and the critical growth, in terms of threshold between existence and nonexistence of solutions, is represented by the quadratic exponential growth retained by the Orlicz class of functions underlying \eqref{Mi}; see \cite{Adi,FMR} and more recently \cite{dMR,dMR2,druet,IMNPDE}.

\noindent Clearly, as the measure $|\Omega|\to +\infty$ no uniform bound can be retained in \eqref{Mi}. However, by restricting the class of functions in the supremum and considering smooth functions such that $\|\nabla u\|_2\leq 1$ and $\|u\|_2\leq K$, $K>0$, as developed by Cao \cite{Cao} one has
\begin{equation}\label{Ti}
\sup_{\|\nabla u\|_2\leq 1,\:\|u\|_2\leq K}\int_{\mathbb R^2}\left(e^{\beta
u^2}-1\right)~dx\leq C(\beta,K)<\infty,\quad \text{ if } \beta \leq 4\pi(1-m)
\end{equation}
where $m\in (0,1)$.
A further result in this direction was obtained by Adachi-Tanaka in \cite{AT} and which reads as follows: for all $u\in H^1(\mathbb{R}^2)\setminus\{0\}$ one has
\begin{equation}\label{ATeq}
\displaystyle \int_{\mathbb{R}^2}\left(e^{\beta\frac{u^2}{\|\nabla u\|_2^2}}-1\right)\,dx\leq C(\beta)\frac{\|u\|_2^2}{\|\nabla u\|_2^2}\:, \text{ where $C(\beta)<\infty$, as long as  } \beta<4\pi
\end{equation}

\noindent The critical Moser case in which $\beta=4\pi$ remained uncovered until Ruf in \cite{R} established the following inequality
\begin{equation}\label{Ri}
\sup_{\displaystyle\stackrel{ \displaystyle u\in H^1(\mathbb{R}^2)}{\|\nabla u\|_2^2+\:\|u\|_2^2\leq 1}}\int_{\mathbb R^2}\left(e^{\beta
u^2}-1\right)~dx<\infty, \quad \text{ if } \beta\leq 4\pi
\end{equation}
which is sharp in the same sense of \eqref{Mi}, namely the supremum becomes infinity as $\beta>4\pi$.

\noindent Recently, a lot of attention has been given to Trudinger-Moser inequalities whose validity extends to the whole space, see \cite{LR,CT, AY, I, RS, IMN, MS, MSN, LLfree, INW}, and this is motivated by connections with mean field equations and conformal geometry \cite{AC,CY}, two dimensional nonlinear Schr\"{o}dinger and Klein-Gordon equations, see e.g. \cite{ASM,OS, IMM, IMNPDE, DC} and references therein. Further generalizations and closely related problems can be found in \cite{BW,BFM,C,FM2,RT}.

The first part of this work aims to better clarify the notion of criticality related to inequalities \eqref{Ti}, \eqref{ATeq} and \eqref{Ri} in connection with the parameter $\beta$. What qualifies a ``critical problem'' is the possible lack of compactness in connection with:
concentration phenomena, which originates from the invariance under some group action; vanishing phenomena, in presence of unbounded domains, which loosely speaking is the counterpart of concentration, as for example extremal sequences may flatten down on the plane still maintaining positive energy; and eventually mass transportation due to translation invariance, see \cite{PLL}.

\noindent Inequality \eqref{Ri} turns out to be critical with respect to all the above features. Indeed, the analysis carried out in \cite{R,CC} shows that when $\beta=4\pi$ in order to prove that the supremum is attained one has to estimate the non compactness level at which concentration occurs and proving that (normalized) extremals avoid that level. Moreover, the vanishing case has been considered in \cite{I}, where the author proves that inequality \eqref{Ri} is not attained when $\beta$ is sufficiently small.

\noindent On the
contrary, no inequality of the form \eqref{Ti}, \eqref{ATeq} may
hold when $\beta=4\pi$ in the sense of Theorem \ref{MT} below; in particular, no
concentration occurs. Then, we show that inequalities \eqref{Ti},
\eqref{ATeq} are always attained regardless of $\beta$ in the
optimal range and hence also vanishing does not play any role.
More precisely, one has:
\begin{thm}\label{MT}
Let $0\leq\delta<1 $ and $K>0$. Then, for any $\beta\in (0, 4\pi(1-\delta)^{-2})$ there is a
constant $C=C(\beta,\delta, K )>0$ such that
\begin{equation}\label{ineq}
\sup_{\begin{array}{c}
 u\in H^1(\mathbb R^2),\\
\|\nabla u\|_2\leq 1-\delta,\\
\|u\|_2\leq K
\end{array}}\int_{\mathbb R^2}\left(e^{\beta u^2}-1\right)\,dx\leq C
\end{equation}
The inequality is sharp in the sense that for any $0\leq \delta<1$ and for any $K>0$ the supremum in \eqref{ineq} becomes infinity when $\beta=4\pi(1-\delta)^{-2}$.

\vspace{.2cm}

\noindent Moreover, the supremum in \eqref{ineq} is attained.
\end{thm}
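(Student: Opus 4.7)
The plan is to address the three claims of Theorem~\ref{MT} in order.

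For the uniform bound \eqref{ineq}, the subcritical slack $\|\nabla u\|_2\le 1-\delta<1$ invites a rescaling reduction to Cao's inequality \eqref{Ti}. Setting $v:=u/(1-\delta)$ converts the constraints into $\|\nabla v\|_2\le 1$ and $\|v\|_2\le K/(1-\delta)$, while $\beta u^2=\beta(1-\delta)^2 v^2$ with $\beta(1-\delta)^2<4\pi$. Choosing $m\in(0,1)$ so that $\beta(1-\delta)^2\le 4\pi(1-m)$ and invoking \eqref{Ti} yields the required constant $C(\beta,\delta,K)$. To verify the sharpness at $\beta=4\pi(1-\delta)^{-2}$, I would construct a rescaled Moser concentrating sequence: let $M_n$ denote the standard Moser function supported in the unit disk with $\|\nabla M_n\|_2=1$ and $\|M_n\|_2^2=O(1/\log n)$. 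Conformal invariance of the planar Dirichlet integral gives $\|\nabla M_n(\cdot/r)\|_2=1$ and $\|M_n(\cdot/r)\|_2=r\|M_n\|_2$ for every $r>0$. Setting $u_n(x):=(1-\delta)M_n(x/r_n)$ with $r_n\to\infty$ chosen so that $(1-\delta)r_n\|M_n\|_2\le K$ (possible since $\|M_n\|_2\to 0$), one obtains $\|\nabla u_n\|_2=1-\delta$, $\|u_n\|_2\le K$, and $\beta u_n^2\equiv 2\log n$ on the disk $\{|x|\le r_n/n\}$, whence
\[
\int_{\mathbb{R}^2}\bigl(e^{\beta u_n^2}-1\bigr)\,dx\ \ge\ (n^2-1)\,\frac{\pi r_n^2}{n^2}\ \longrightarrow\ \infty.
\]

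For attainability, let $\{u_n\}$ be a maximizing sequence. Schwarz rearrangement preserves the constraints and the value of the integral, so I may assume each $u_n$ is radial and non-increasing. Boundedness in $H^1$ yields, along a subsequence, $u_n\rightharpoonup u$ weakly in $H^1$, $u_n\to u$ a.e., and, by Strauss-type radial compactness, $u_n\to u$ strongly in $L^q(\mathbb{R}^2)$ for every $q\in(2,\infty)$. Weak lower semicontinuity gives $\|\nabla u\|_2\le 1-\delta$ and $\|u\|_2\le K$, so $u$ is admissible. The crucial step is the convergence
\[
\int_{\mathbb{R}^2}\bigl(e^{\beta u_n^2}-1\bigr)\,dx\ \longrightarrow\ \int_{\mathbb{R}^2}\bigl(e^{\beta u^2}-1\bigr)\,dx.
\]
Choosing $p>1$ with $p\beta<4\pi(1-\delta)^{-2}$ and applying the first part to $u_n$ bounds $\{e^{\beta u_n^2}-1\}$ in $L^p(\mathbb{R}^2)$, giving uniform integrability on sets of finite measure; combined with a.e.\ convergence, Vitali's theorem handles the limit on each ball $B_R$. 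On the complement, the radial-decreasing bound $u_n(|x|)^2\le K^2/(\pi|x|^2)$ makes $u_n$ uniformly small, so $e^{\beta u_n^2}-1\le 2\beta u_n^2$ there, reducing the tail control to the uniform smallness of $\int_{|x|\ge R}u_n^2\,dx$.

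The principal obstacle is ruling out the vanishing scenario, in which $L^2$-mass of $u_n$ escapes to infinity. I would handle this via a Lions-type concentration-compactness argument, leveraging the pointwise expansion
\[
\int_{\mathbb{R}^2}\bigl(e^{\beta u^2}-1\bigr)\,dx\ =\ \beta\|u\|_2^2\ +\ \sum_{k\ge 2}\frac{\beta^k}{k!}\|u\|_{2k}^{2k}.
\]
This is strictly larger than $\beta\|u\|_2^2$ whenever $u\not\equiv 0$, so testing against an admissible smooth bump with $\|u\|_2=K$ (realized by a wide, low profile that respects the gradient constraint) shows the supremum strictly exceeds $\beta K^2$; since a vanishing maximizing sequence would contribute asymptotically at most $\beta K^2$, any such sequence must be tight. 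Vitali's theorem then delivers the convergence of integrals, making $u$ the sought extremal.
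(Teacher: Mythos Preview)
Your arguments for the uniform bound and for sharpness are correct. For sharpness your construction is in fact simpler than the paper's: the paper builds a sequence with $\|\nabla u_n\|_2=\lambda_n\to 1^-$ strictly below $1$, which it needs for the side observation (Remark~\ref{extraprop}) that even the supremum over $\|\nabla u\|_2<1$ is infinite at $\beta=4\pi$; for the theorem as stated, your choice $\|\nabla u_n\|_2=1-\delta$ on the nose suffices.

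The attainability argument, however, has a genuine gap. You correctly show that the supremum exceeds $\beta K^2$ and that a maximizing sequence with weak limit $u=0$ yields a value at most $\beta K^2$, hence $u\neq 0$. But your conclusion ``any such sequence must be tight'' does not follow: ruling out $u=0$ excludes total vanishing, not the dichotomy scenario in which $u\neq 0$ yet $\|u\|_2^2<\lim_n\|u_n\|_2^2$, i.e.\ part of the $L^2$-mass leaks to infinity while a nontrivial piece remains. In that case your tail bound $\int_{|x|\ge R}(e^{\beta u_n^2}-1)\,dx\le C\int_{|x|\ge R}u_n^2\,dx$ does not vanish uniformly in $n$, and Vitali does not deliver convergence of the full functional. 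What your ingredients \emph{do} give (Vitali on balls together with the radial $O(u_n^4)$ control on the tail) is $\int_{\mathbb R^2}P(u_n)\,dx\to\int_{\mathbb R^2}P(u)\,dx$ for $P(s)=e^{\beta s^2}-1-\beta s^2$, whence
\[
\sigma_\beta=\int_{\mathbb R^2}\bigl(e^{\beta u^2}-1\bigr)\,dx+\beta\Bigl(\lim_n\|u_n\|_2^2-\|u\|_2^2\Bigr).
\]
The paper then closes with a rescaling step you are missing: set $\tau^2:=\lim_n\|u_n\|_2^2/\|u\|_2^2\ge 1$ and test with $u_\tau(x):=u(x/\tau)$, which is admissible since $\|\nabla u_\tau\|_2=\|\nabla u\|_2$ and $\|u_\tau\|_2^2=\tau^2\|u\|_2^2=\lim_n\|u_n\|_2^2\le K^2$. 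A direct computation yields $\sigma_\beta\ge\sigma_\beta+(\tau^2-1)\int_{\mathbb R^2}P(u)\,dx$, forcing $\tau=1$ because $u\neq 0$ makes $\int P(u)>0$. Only then does $u_n\to u$ in $L^2$, and the identity above shows $u$ is extremal.
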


\noindent We point out that attainability will be proved by showing that extremals of the energy functional avoid concentration as well as vanishing levels and then applying the compactness result of \cite{IMN}.

Though \eqref{ATeq} and \eqref{Ri} seem so far apart, and the constraint in \eqref{Ri} which involves the complete Sobolev norm apparently necessary to reach the critical case $\beta=4\pi$, surprisingly an improvement of the constant $C(\beta)$ in \eqref{ATeq} yields the equivalence of \eqref{ATeq} and \eqref{Ri} as established in the following
\begin{thm}\label{ReqAT} There exists $C>0$ such that the following inequality holds for all $u \in H^1(\R^2)$ with $\| \nabla u \|_2 \leq 1$
\begin{equation}\label{ATimp}
\int_{\mathbb R^2}\left(e^{\beta
u^2}-1\right)~dx\leq \frac{C}{1-\frac{\beta}{4\pi}}\:\|u\|_2^2,\quad \beta<4\pi
\end{equation}
The constant appearing in the right hand side of \eqref{ATimp}
improves the Adachi-Tanaka constant $C(\beta)$ in \eqref{ATeq}
as $\beta \to 4\pi$.

\noindent Furthermore, consider the Ruf supremum
\begin{equation}\label{Rufsup}
\sup_{\displaystyle\stackrel{\displaystyle u\in H^1(\mathbb{R}^2)}{\|\nabla u\|_2^2+\:\|u\|_2^2\leq 1}}\int_{\mathbb R^2}\left(e^{4\pi
u^2}-1\right)~dx<\infty
\end{equation}
then inequalities  \eqref{Rufsup} and \eqref{ATimp} are equivalent.
\end{thm}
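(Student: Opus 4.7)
The plan hinges on the two-dimensional scale invariance of $\|\nabla\cdot\|_2$: both directions of the equivalence follow from passing between $u$ and a two-parameter family of functions of the form $v(x)=r\,u(\lambda x)$, where one parameter normalizes the Dirichlet norm and the other either normalizes the $L^2$ norm or tunes the exponent.

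To derive \eqref{ATimp} from Ruf's inequality \eqref{Rufsup}, for $u\in H^1(\mathbb{R}^2)$ with $\|\nabla u\|_2\le 1$ and $\beta\in(0,4\pi)$ I would set $w_\lambda(x):=\sqrt{\beta/(4\pi)}\,u(\lambda x)$. Scale invariance gives $\|\nabla w_\lambda\|_2^2=(\beta/(4\pi))\|\nabla u\|_2^2$, while $\|w_\lambda\|_2^2=\lambda^{-2}(\beta/(4\pi))\|u\|_2^2$; choosing $\lambda$ so that $\|\nabla w_\lambda\|_2^2+\|w_\lambda\|_2^2=1$ forces
\[ \lambda^2=\frac{(\beta/(4\pi))\|u\|_2^2}{1-(\beta/(4\pi))\|\nabla u\|_2^2}\le \frac{\|u\|_2^2}{1-\beta/(4\pi)}. \]
Since $4\pi w_\lambda^2(x)=\beta\,u^2(\lambda x)$, applying \eqref{Rufsup} to $w_\lambda$ and changing variables via $y=\lambda x$ yields $\lambda^{-2}\int_{\mathbb{R}^2}(e^{\beta u^2}-1)\,dy\le M$, which is exactly \eqref{ATimp} with $C=M$.

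For the reverse implication, given $u$ with $a^2+b^2\le 1$ (where $a:=\|\nabla u\|_2$, $b:=\|u\|_2$), I would apply \eqref{ATimp} to the purely multiplicative dilation $v:=r\,u$ with $r:=\sqrt{4\pi/\beta}$, where $\beta\in(0,4\pi)$ is subject only to $r\,a\le 1$, equivalently $\beta\ge 4\pi a^2$. Then $\beta v^2=4\pi u^2$ and \eqref{ATimp} gives
\[ \int_{\mathbb{R}^2}(e^{4\pi u^2}-1)\,dx\le \frac{C\,b^2}{s(1-s)},\qquad s:=\beta/(4\pi)\in[a^2,1). \]
Optimizing $s$ over the admissible interval (taking $s=1/2$ if $a^2\le 1/2$ and $s=a^2$ otherwise) and using $b^2\le 1-a^2$ produces a uniform bound of order $4C$, which is \eqref{Rufsup}. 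As for the improvement of the Adachi--Tanaka constant, I would inspect the proof of \eqref{ATeq}: the rearrangement split at the super-level set $\{\phi>1\}$ combined with a pointwise bound of the form $\phi^2\le (1+t)(\phi-1)^2+(1+1/t)$ in the exponent generates a multiplicative factor $\exp(c\,\beta/(4\pi-\beta))$, whose growth as $\beta\to 4\pi$ dominates any polynomial rate, in particular $(1-\beta/(4\pi))^{-1}$.

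The scaling arguments themselves are short and essentially mechanical, and the only step requiring attention is the elementary optimization of $s(1-s)$ in the reverse direction. The genuine obstacle is making the comparison with $C(\beta)$ quantitatively precise: this requires tracing through the Adachi--Tanaka proof to extract the actual rate of blow-up rather than invoking it as a qualitative fact, and taking some care because the bound $\beta\ge 4\pi a^2$ couples the free exponent to the Dirichlet norm of $u$.
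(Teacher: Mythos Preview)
Your argument is correct and essentially identical to the paper's: both directions rest on the scalings $J_\beta(ru)=J_{r^2\beta}(u)$ and $J_\beta(u(\lambda\cdot))=\lambda^{-2}J_\beta(u)$, and your optimization over $s$ in the reverse direction reproduces exactly the paper's case split at $\theta=\|\nabla u\|_2^2=1/2$ (the paper's rescalings $\widetilde u=\sqrt{2}\,u$ and $u_\theta=u/\sqrt{\theta}$ are precisely your $v$ at $s=1/2$ and $s=a^2$). For the constant comparison the paper is more explicit---it extracts the Adachi--Tanaka constant $C_\varepsilon(\beta)$ in closed form and shows its optimum behaves like $e^{1/(1-\beta/4\pi)}/(1-\beta/4\pi)^2$ as $\beta\to 4\pi$---but your description of the mechanism is accurate.
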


In the second part of this paper we study a limiting situation, the so-called \textit{zero mass case}, see e.g. \cite{Y,BL,DS,FM}. Indeed, complete (suitably weighted by potentials) Sobolev norms cast many nonlinear PDEs, such as nonlinear Schr\"{o}dinger equations and mean field equations \cite{BL}, into coercive variational problems. For instance, let us consider as a prototype the Schr\"{o}dinger operator $-\Delta + m_0V(x)I$, where $V$ is a potential which confines a particle of mass $m_0$ and the energy is given by $E(u)=1/2\|\nabla u\|_2^2+1/2\|\sqrt{m_0 V}\,u\|_2^2$ and which has the weighted Sobolev space $H^1_V(\R^2)$ as a natural function space domain: clearly, $E(u)\to +\infty$ if and only if $\|u\|\to+\infty$.
As $m_0\to 0$, it is natural to wonder what happens when only the Dirichlet norm $\| \nabla \cdot \|_2$ is considered, thus $m_0=0$, and to look for embeddings of the homogeneous space $\mathcal D^{1,2}(\R^N)$, which is defined as the completion of smooth compactly supported functions with respect to $\|\nabla\cdot\|_2$. It is well known that
$$\mathcal D^{1,2}(\R^N) \lhookrightarrow L^{2^*}(\R^N), \quad \text{where } 2^*:= \frac {2N}{N-2},\quad N\geq 3$$
while $N=2$ turns out to be a limiting case since by scaling arguments, see \cite{LiebL,AdF}, one has
\begin{equation}
\label{D12emb}
\mathcal D^{1,2}(\R^2) \nlhookrightarrow L^p(\R^2), \quad p \in [1,+ \infty]
\end{equation}
As a consequence, this rules out exponential integrability and thus any kind of Trudinger-Moser type inequality. In this context, denoting by $u^*$ the decreasing rearrangement of the function $u$,  we prove the following optimal result
\begin{thm}\label{limiting} Let $T>0$ then for all $u\in \mathcal D^{1,2}(\R^2)$ the following inequality holds
\begin{equation}\label{ZMine}
\sup_{0<t\leq T} \frac{u^*(t)-u^*(T)}{\sqrt{\log\frac{T}{t}}}\leq  \frac{1}{\sqrt{4\pi}}\|\nabla u\|_2
\end{equation}
The constant appearing in the right hand side of \eqref{ZMine} is
the best possible and attained, for any fixed $T>0$, by the Moser-type sequence of functions
$$
u_{R,\delta}(x):= \frac 1 {\sqrt {2 \pi}}
\begin{cases}
0\,, & |x|>R \vspace{0.1cm}
\\
\displaystyle{ \frac{ \log \frac R r } {\sqrt{\log \delta}} }\,, &
\displaystyle{\frac R \delta \leq |x| \leq R} \vspace{0.1cm}
\\
\sqrt{\log \delta}\,, & \displaystyle{0 \leq |x| \leq \frac R \delta
}
\end{cases}, \qquad \delta>1, \pi R^2=T
$$
\end{thm}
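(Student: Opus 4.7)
The plan is to combine symmetric decreasing rearrangement with a weighted Cauchy--Schwarz inequality along radii. The left-hand side of \eqref{ZMine} depends only on the distribution function of $u$, while the P\'olya--Szeg\H o principle gives $\|\nabla u^{\#}\|_{2}\leq \|\nabla u\|_{2}$ for the Schwarz symmetrization $u^{\#}(x):=u^{*}(\pi|x|^{2})$. Hence it suffices to prove \eqref{ZMine} when $u$ is nonnegative, radial, and nonincreasing in $r=|x|$, so that $u(x)=u^{*}(\pi r^{2})$.

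Under this reduction, for $0<t\leq T$ set $r:=\sqrt{t/\pi}$, $R:=\sqrt{T/\pi}$. Writing the difference as a radial integral and splitting $-u'(\rho)=(-u'(\rho)\sqrt{\rho})\cdot\rho^{-1/2}$ to apply Cauchy--Schwarz gives
\[
u^{*}(t)-u^{*}(T)=\int_{r}^{R}(-u'(\rho))\,d\rho\leq \Big(\int_{r}^{R}\rho\,|u'(\rho)|^{2}\,d\rho\Big)^{1/2}\Big(\int_{r}^{R}\frac{d\rho}{\rho}\Big)^{1/2}.
\]
The first factor is controlled by $(2\pi)^{-1/2}\|\nabla u\|_{2}$ via the radial identity $\|\nabla u\|_{2}^{2}=2\pi\int_{0}^{\infty}\rho|u'(\rho)|^{2}\,d\rho$, while the second equals $\sqrt{\log(R/r)}=\sqrt{\tfrac{1}{2}\log(T/t)}$. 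Multiplying yields $u^{*}(t)-u^{*}(T)\leq \frac{1}{\sqrt{4\pi}}\|\nabla u\|_{2}\sqrt{\log(T/t)}$, uniformly in $t\in(0,T]$.

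For attainment and sharpness of the constant $1/\sqrt{4\pi}$, I would plug in the truncated logarithm $u_{R,\delta}$ from the statement. A direct radial computation gives $\|\nabla u_{R,\delta}\|_{2}^{2}=(\log\delta)^{-1}\int_{R/\delta}^{R}\rho^{-1}\,d\rho=1$, and since $u_{R,\delta}$ is already radial and decreasing, $u_{R,\delta}^{*}(t)=u_{R,\delta}(\sqrt{t/\pi})$, whence $u_{R,\delta}^{*}(T)=0$. On $(0,T/\delta^{2}]$ the ratio in \eqref{ZMine} equals $\sqrt{\log\delta}/\sqrt{2\pi\log(T/t)}$ and is increasing in $t$, while on $[T/\delta^{2},T]$ it equals $\sqrt{\log(T/t)}/(2\sqrt{2\pi\log\delta})$ and is decreasing in $t$; both branches take the common value $1/\sqrt{4\pi}$ at $t=T/\delta^{2}$, so the supremum is attained and equals $\frac{1}{\sqrt{4\pi}}\|\nabla u_{R,\delta}\|_{2}$. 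The main delicate point is the justification of symmetric rearrangement for a general $u\in \mathcal{D}^{1,2}(\mathbb{R}^{2})$: by \eqref{D12emb} such functions need not vanish at infinity in any $L^{p}$ sense and are only defined modulo constants, so one must first prove the inequality for $u\in C_{c}^{\infty}(\mathbb{R}^{2})$, where every step above is classical, and then extend by density using that the left-hand side of \eqref{ZMine} is itself a seminorm controlled by $\|\nabla u\|_{2}$.
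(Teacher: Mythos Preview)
Your proof is correct and follows essentially the same route as the paper: reduce to the spherically symmetric rearrangement via P\'olya--Szeg\H{o}, apply the weighted Cauchy--Schwarz inequality $-u'(\rho)=(-u'(\rho)\sqrt{\rho})\cdot\rho^{-1/2}$ along radii, and verify sharpness and attainment by direct computation on the Moser-type functions $u_{R,\delta}$. Your treatment of attainment (splitting the ratio into two monotone branches meeting at $t=T/\delta^{2}$) and your remark on the density argument for general $u\in\mathcal{D}^{1,2}(\mathbb{R}^{2})$ are somewhat more explicit than the paper's, but the underlying argument is identical.
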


\noindent We notice that extremals of \eqref{ZMine} turn out to be strongly connected with the scale invariance property of \eqref{ZMine} under the groups action of dilations.

Motivated by Theorem \ref{limiting}, we move our attention from the uniform bound in \eqref{Ti}, \eqref{ATeq} and \eqref{Ri} which underly a suitable Orlicz class of functions, to a finer target function space setting for the embedding of $H^1(\R^2)$, namely

\begin{thm}\label{main4}
For all $u\in H^1(\mathbb{R}^2)$ one has
\begin{equation}\label{limine}
\displaystyle\sup_{T>0}\:\sup_{t\in (0,T]}\frac{u^*(t)}{\sqrt{\frac{4\pi}{T}+\log\frac{T}{t}}}\leq \frac{1}{\sqrt{4\pi}} \: \sqrt{\|\nabla u\|_2^2+\|u\|_2^2}
\end{equation}
and the constant in the right and side of \eqref{limine} is
optimal.

\noindent Moreover, inequality \eqref{limine} yields the critical Ruf inequality with respect to the standard Sobolev norm, namely
\begin{equation}
\label{RufModNorm}
\sup_{\begin{array}{c}
 u\in H^1(\mathbb R^2)\\
\|\nabla u\|_2 + \|u\|_2 \leq 1
\end{array}}\int_{\mathbb R^2} (e^{\beta u^2} -1) \, dx < + \infty,\quad \text{ if and only if }\quad \beta\leq 4\pi
\end{equation}
\end{thm}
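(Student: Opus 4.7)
My strategy is: (i) derive \eqref{limine} from Theorem \ref{limiting} by squaring and AM--GM; (ii) check sharpness on the Moser-type sequence of Theorem \ref{limiting}; (iii) deduce \eqref{RufModNorm} from \eqref{limine} via a Cao--Ruf truncation combined with Moser's classical inequality \eqref{Mi} on bounded domains. The main obstacle will be uniformity at the Ruf threshold $\|\nabla u\|_2+\|u\|_2=1$, addressed by a careful choice of truncation level.

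\textbf{Proof of \eqref{limine} and sharpness.} Since $H^1(\R^2)$ embeds continuously in $\mathcal{D}^{1,2}(\R^2)$, Theorem \ref{limiting} gives, for every $T>0$ and $t\in(0,T]$,
$$u^*(t)\leq u^*(T)+\frac{\|\nabla u\|_2}{\sqrt{4\pi}}\sqrt{L},\qquad L:=\log(T/t).$$
Squaring and controlling the cross term via the weighted AM--GM $2ab\leq a^2+b^2$ with $a:=\|\nabla u\|_2/\sqrt{T}$ and $b:=u^*(T)\sqrt{TL/(4\pi)}$ yields
$$u^*(t)^2\leq\frac{4\pi+TL}{4\pi T}\bigl(T u^*(T)^2+\|\nabla u\|_2^2\bigr),$$
and the Chebyshev inequality $Tu^*(T)^2\leq\|u\|_2^2$ gives \eqref{limine}. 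Optimality of the constant $1/\sqrt{4\pi}$ is verified by evaluating both sides along the Moser-type sequence $u_{R,\delta}$ of Theorem \ref{limiting} at $T=\pi R^2$ and $t=T/\delta^2$: one has $\|\nabla u_{R,\delta}\|_2=1$, $\|u_{R,\delta}\|_2\to 0$ as $\delta\to\infty$, $u^*_{R,\delta}(t)=\sqrt{\log\delta/(2\pi)}$, and $4\pi/T+\log(T/t)=4/R^2+2\log\delta$, so the ratio between the two sides of \eqref{limine} tends to $1$.

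\textbf{Deduction of \eqref{RufModNorm}.} The direction $\beta>4\pi$ is immediate via concentrating Moser bubbles. For $\beta\leq 4\pi$, given $u$ with $\sigma:=\|\nabla u\|_2$, $\tau:=\|u\|_2$, $\sigma+\tau\leq 1$, I truncate at the level $K:=\sqrt{1-\sigma^2}\geq\tau$. On $\{|u|\leq K\}$ the elementary bound $e^{4\pi u^2}-1\leq 4\pi e^{4\pi K^2}u^2$ contributes at most $4\pi e^{4\pi}\tau^2$. On $\Omega:=\{|u|>K\}$, of measure $|\Omega|\leq\tau^2/K^2\leq 1$ by Chebyshev, the function $w:=(|u|-K)^+$ belongs to $H^1_0(\Omega)$ with $\|\nabla w\|_2\leq\sigma$. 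The algebraic inequality $u^2\leq(1+\varepsilon)w^2+(1+\varepsilon^{-1})K^2$ with $\varepsilon:=(1-\sigma^2)/\sigma^2$ produces $(1+\varepsilon^{-1})K^2=1$ and $(1+\varepsilon)\|\nabla w\|_2^2\leq 1$, so
$$\int_\Omega\bigl(e^{4\pi u^2}-1\bigr)\,dx\leq e^{4\pi}\int_\Omega\bigl(e^{4\pi v^2}-1\bigr)\,dx+(e^{4\pi}-1)|\Omega|,$$
where $v:=\sqrt{1+\varepsilon}\,w\in H^1_0(\Omega)$ satisfies $\|\nabla v\|_2\leq 1$. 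The remaining exponential integral is bounded uniformly by Moser's inequality \eqref{Mi} applied on the bounded domain $\Omega$ (alternatively, by integrating the Lorentz--Zygmund pointwise bound on $v^*$ provided by \eqref{limine}), yielding a constant independent of $u$.

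\textbf{Main obstacle.} The critical difficulty is uniformity as $\sigma+\tau\nearrow 1$: a naive integration of the pointwise bound \eqref{limine} on $(0,T]$ produces a constant diverging like $(1-A)^{-1}$ in $A:=\sigma^2+\tau^2$, and $A$ can reach $1$ on the Ruf constraint surface. The truncation level $K=\sqrt{1-\sigma^2}$ is the unique choice that forces the boundary factor $e^{4\pi(1+\varepsilon^{-1})K^2}$ to collapse to the $u$-independent constant $e^{4\pi}$ while simultaneously keeping the support of the truncated function within a set of Lebesgue measure at most $1$; this is what reduces the $\R^2$ problem to a subcritical Moser problem on a bounded domain, where \eqref{limine} can be applied safely.
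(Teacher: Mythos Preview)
Your proof of \eqref{limine} and its sharpness is correct and matches the paper's argument: both combine the $\mathcal D^{1,2}$ estimate from Theorem~\ref{limiting} with the Chebyshev bound $u^*(T)\le\|u\|_2/\sqrt T$, followed by Cauchy--Schwarz (your weighted AM--GM is the same identity). The optimality check on the Moser functions is likewise the same.

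For the passage to \eqref{RufModNorm} your truncation at level $K=\sqrt{1-\sigma^2}$ is correct and the choice of $\varepsilon$ is the right one; the resulting bound via Moser's inequality \eqref{Mi} on $\Omega=\{|u|>K\}$ works. However, this route does not establish what the theorem actually claims --- that \eqref{limine} \emph{yields} \eqref{RufModNorm}. Your key analytic input is \eqref{Mi}, an external result, and your parenthetical alternative ``by integrating the pointwise bound on $v^*$ provided by \eqref{limine}'' fails: on $\Omega$ you only have $\|\nabla v\|_2\le 1$, so \eqref{limine} gives $4\pi v^*(t)^2\le A\bigl(4\pi/T+\log(T/t)\bigr)$ with $A=\|\nabla v\|_2^2+\|v\|_2^2\ge 1$, and $(T/t)^A$ is not integrable near $t=0$. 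Even Alvino's sharper bound $4\pi v^*(t)^2\le\log(|\Omega|/t)$ sits exactly at the divergence threshold. Your reduction lands on a \emph{critical}, not subcritical, Moser problem, and \eqref{limine} alone is too weak there.

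The paper takes a genuinely different route: it first integrates \eqref{limine} to obtain an improved Adachi--Tanaka inequality with quadratic blow-up (Proposition~\ref{PropATquad}),
\[
\int_{\R^2}\bigl(e^{\beta u^2}-1\bigr)\,dx\le\frac{C}{(1-\beta/4\pi)^2}\,\|u\|_2^2,\qquad \|\nabla u\|_2\le 1,\ \beta<4\pi,
\]
choosing the auxiliary $L^2$ constraint $\|u\|_2=K$ so that $(1+K^2)\beta/4\pi\le 1-K^2$ makes the $(T/t)$ exponent strictly below $1$. Then, as in Section~\ref{SectionImprAT}, one rescales $u\mapsto u/\sigma$ and applies this at $\beta=4\pi\sigma^2$: under $\sigma+\tau\le 1$ the factor $\tau^2/(1-\sigma^2)^2\le 1/(1+\sigma)^2$ stays bounded for $\sigma>1/2$, while the case $\sigma\le 1/2$ is subcritical. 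This argument uses \eqref{limine} as the sole analytic input, which is precisely the point of the theorem; your approach, while correct as a proof of \eqref{RufModNorm} in isolation, offloads the critical estimate to Moser and so misses the intended implication.
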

\noindent
\noindent We observe that the quantity involved in the left hand side of \eqref{limine} turns out to be characterizing a suitable Zygmund class of functions in unbounded domains, see Section \ref{SecZygmund}.

\noindent On the one hand Theorem \ref{ReqAT} somehow downplays the critical role of the parameter $\beta\leq 4\pi$, on the other hand throws new light on new aspects of criticality as the importance of the constant appearing in the right hand side of Adachi-Tanaka type inequalities which turns out as well to be the key ingredient in proving Theorem \ref{main4} and hence establishing a connection between inequalities \eqref{limine}, \eqref{ATeq} and \eqref{Ri}.

\section{Adachi-Tanaka type inequalities: proof of Theorem \ref{MT}}

\subsection{A counterexample} \label{OptimalRange}
Let $\beta>0$ and define
\begin{equation}
\label{TMfunc}
J_\beta(u):= \int_{\R^2} \left(e^{\beta u^2} -1\right) \, dx, \qquad u \in H^1(\R^2)
\end{equation}
Notice that, the Trudinger-Moser functional $J_\beta$ has some interesting scaling properties, namely for any $u \in H^1(\R^2)$ one has
\begin{equation}
\label{scal1}
J_\beta(au)= J_{a^2 \beta}(u), \quad \forall\, a \in \R
\end{equation}
and
\begin{equation}
\label{scal2}
J_\beta (u_b) = \frac 1{b^2} J_\beta (u), \quad \forall\, b \in \R
\end{equation}
where $u_b(x):=u(bx)$, $x \in \R^2$. Therefore, in order to prove Theorem \ref{MT}, we may assume without loss of generality $\delta=0$ and $K=1$, and we have to prove the following
\begin{equation} \label{ren}
\sigma_{\beta}:=\sup_{\begin{array}{c}
 u\in H^1(\mathbb R^2),\\
\|\nabla u\|_2\leq 1,\\
\|u\|_2\leq 1
\end{array}}
\int_{\mathbb R^2} \left(e^{\beta u^2}  -1\right)  dx  \:
\begin{cases}
\leq C(\beta), & \text{if } 0< \beta < 4\pi
\\
= + \infty, & \text{if } \beta= 4 \pi
\end{cases}
\end{equation}

\noindent The first part of Theorem \ref{MT}, namely $\sigma_{\beta} \leq C(\beta)$ for any $\beta \in (0, 4\pi)$, follows from inequality \eqref{ATeq}. Indeed, from the scaling property \eqref{scal2} one has
\begin{equation*}
\label{AT=Cao}
\sigma_{\beta} =
\sup_{\begin{array}{c}
 u\in H^1(\mathbb R^2) \setminus \{0\},\\
\|\nabla u\|_2\leq 1\\
\end{array}}
\frac 1{\|u\|_2^2}\int_{\mathbb R^2} \left(e^{\beta u^2}-1\right)dx
\end{equation*}
which makes evident that \eqref{ineq} is nothing but the Adachi-Tanaka inequality \eqref{ATeq}.

\noindent However, our main concern here is to prove optimality of \eqref{ren}, i.e. $\sigma_{4 \pi}= + \infty$, which is somehow delicate and can not be obtained from the argument used in \cite{AT} to prove optimality of \eqref{ATeq}. Indeed, Adachi and Tanaka considered in \cite{AT} the so-called \textit{Moser sequence}
\begin{equation}
\label{MosSeq1}
w_n(x)=\frac{1}{\sqrt{2\pi}}\left\{%
\begin{array}{ll}
\displaystyle \frac 1{(\log n)^{1/2}} \: \log \frac 1{|x|}, & \displaystyle \frac 1 n < |x| \leq 1
\\
\displaystyle (\log n)^{1/2}, & \displaystyle 0 \leq |x| \leq \frac 1 n
\end{array}%
\right.
\end{equation}
It is easy to see that $\| \nabla w_n \|_2=1$, $\| w_n\|_2 \to 0$ as $n \to + \infty$ and, as a direct consequence of \eqref{Mi},
\begin{equation} \label{Mos}
\sup_n \int_{\mathbb R^2} \left(e^{4 \pi w_n^2} -1\right) dx \leq C
\end{equation}
Furthermore, since
$$
\int_{\mathbb R^2} \left(e^{4 \pi w_n^2} -1\right) dx \geq
\frac 12
$$
as $n \to +\infty$, we have that
$$\lim_{n \to + \infty} \frac 1{\| w_n\|_2^2} \int_{\mathbb R^2} \left(e^{4 \pi w_n^2} -1\right) dx =+ \infty$$
which gives the optimality of \eqref{ATeq}. Nevertheless, Moser's sequence is not useful to prove optimality of \eqref{ren}, since it satisfies \eqref{Mos}.

We next construct an explicit sequence of functions which realizes
$\sigma_{4 \pi}= + \infty$ and which carries some extra
properties, see Remark \ref{extraprop} below. Let $B_{R_n}$ be the
ball of radius $R_n$, where
$$
R_n:=\frac{\sqrt{\log n}}{\log \log n}\longrightarrow \infty,\quad
\text{ as } n\to \infty
$$
and consider the sequence of radial functions
\begin{equation*}
u_n(x)=\frac{1}{\sqrt{2\pi}}\left\{%
\begin{array}{ll}
\displaystyle \frac{1}{\sqrt{\log n}}\left[1-\frac{\log \log n}{4
\log n}\right]^{1/2}\log\left(\frac{R_n}{|x|}\right),
      & \displaystyle \frac{R_n}{n}<|x|\leq R_n \\
    \\
\displaystyle    \sqrt{\log n} \left[1-\frac{\log \log n}{4 \log
n}\right]^{1/2}, & \displaystyle 0\leq |x|\leq \frac{R_n}{n}
\end{array}%
\right.
\end{equation*}
If we let
$$\lambda_n:=  \Bigl[ 1 - \frac{\log \log n}{4 \log n}\Bigr]^{1/2}$$
then
$$u_n (x):= \lambda_n w_n \Bigl( \frac x {R_n}\Bigr)$$
where $w_n$ is defined by \eqref{MosSeq1}. It is easy to see that $u_n$ is a radially decreasing, $\mathcal C^1$-piecewise function such that
$$\| \nabla u_n \|_2= \lambda_n  \| \nabla w_n \|_2 = \lambda_n \longrightarrow 1^{-}, \quad
\text{ as } n \to \infty$$
whereas
\begin{eqnarray*}
\|u_n\|_2^2 &=& \lambda_n^2 R_n^2 \| w_n\|_2^2 = \lambda_n^2 R_n^2 \frac 1{\log n} \Bigl( \frac 1 4 - \frac 1 {4 n^2} - \frac{\log n}{2n^2}\Bigr)
\end{eqnarray*}
and thus $\|u_n\|_2\to 0$, as $n\to\infty$, since $R_n^2/ \log n \to 0$, as $ n \to \infty$.

\noindent Now, let us estimate from below $J_{4 \pi}(u_n)$. In view of \eqref{scal1} and \eqref{scal2}, we have
\begin{eqnarray*}
J_{4 \pi}(u_n) & = & R_n^2 J_{4 \pi \lambda_n ^2}(w_n) \geq 2 \pi R_n^2 \int_0^{\frac 1 n} \left( e^{4 \pi \lambda_n^2 w_n^2} -1 \right) r \, dr
\\
&=& \pi R_n^2 \: \frac 1 {n^2} \left( e^{2 \lambda_n^2 \log n} -1 \right) = \pi R_n^2 \Bigl( e^{2 \lambda_n^2 \log n - 2 \log n} - \frac 1{n^2} \Bigr)
\\
&=& \pi R_n^2 \Bigl( e^{- \frac 1 2 \log \log n} - \frac 1{n^2} \Bigr) = \pi R_n^2\left(\frac{1}{\sqrt{\log n}}-
\frac{1}{n^2}\right)\\
&=&\pi \frac{\log n}{(\log \log n)^2}\left(\frac{1}{\sqrt{\log
n}}- \frac{1}{n^2}\right)\sim \pi \frac{\sqrt{\log n}}{(\log \log
n)^2}\longrightarrow \infty
\end{eqnarray*}
as $n\to \infty$, and the first part of Theorem \ref{MT} is proved.

\begin{rem} \label{extraprop}
One may still wonder if it is possible to extend the validity of \eqref{ineq} up to the critical case, namely including the borderline value $\beta=4\pi$, by requiring additional conditions. In fact, one may ask: \textit{what if we relax the constraint $\| \nabla u \|_2^2 + \|u\|_2^2 \leq 1$ in \eqref{Ri} requiring just $\| \nabla u\|_2 <1$ and $\| u\|_2 \leq K$?}

\noindent Even in this case, the sequence constructed above serves as a counterexample to show that
$$
\sup_{\begin{array}{c}
 u\in H^1(\mathbb R^2),\\
\|\nabla u\|_2 < 1,\\
\|u\|_2\leq K
\end{array}}
\int_{\mathbb R^2}\left(e^{4\pi u^2}-1\right)dx= +\infty
$$
\end{rem}

\subsection{A compactness result: the supremum is attained} \label{Attainability}

What we have proved so far shows that the reduced constraint

$$\|\nabla u\|_2 \leq 1- \delta\quad\text{ and }\quad \|u\|_2\leq1$$

\noindent to which we refer as the \textit{reduced case} in the sequel, in place of the \textit{Ruf case}

$$\| u\|_S^2:=\|\nabla u\|_2^2+\|u\|_2^2\leq 1$$

\noindent does not allow to cover the critical exponent $\beta=4\pi(1-\delta)^{-2}$ in which one expects the lack of compactness due to concentration phenomena; in particular for $\delta>0$ we stress that, in the reduced case, $\beta=4\pi$ is subcritical. Let
\begin{equation*}
d_{\beta}:=\sup_{\begin{array}{c}
 u\in H^1(\mathbb R^2),\\
\|u\|_{S}\leq 1
\end{array}}\int_{\mathbb R^2}\left(e^{\beta
u^2}-1\right)~dx\leq C(\beta, \tau)<\infty, \quad \forall \beta \in [0, 4 \pi]
\end{equation*}
In \cite{R} Ruf showed that, in the \textit{critical case} $\beta=4 \pi$, the supremum $d_{4 \pi}$ is attained and the most involved and inspiring part of his proof is the estimate of the energy level of any \textit{normalized concentrating sequence}, namely $u_n\in H^1(\R^2)$ which converges weakly to zero, $\|u_n\|_S=1$ whereas $\|u_n\|_{S,\:|x|\geq \rho}\to 0$, as $n\to\infty$ for any $\rho>0$. More precisely, in \cite{R} was proved that
$$d_{cl}(4 \pi) \leq e \pi < d_{4 \pi}$$
where
$$d_{cl} (\beta):= \sup\left\{ \lim_{n \to + \infty}\int_{\mathbb{R}^2}\left(e^{\beta u_n^2}-1\right)\,dx\,|\,\{u_n\}, \|u_n\|_S=1, \text{ is a concentrating sequence}\right\}$$
As observed in \cite{I}, the existence of a maximizer for $d_{\beta}$ is non-trivial even in the subcritical case $\beta < 4 \pi$. In fact, in addition to concentration phenomena, a maximizing sequence $\{u_n\}$ for $d_\beta$ may exhibit a \textit{vanishing behavior}, i.e.
$$
\lim_{n \to + \infty} \| \nabla u_n\|_2=0 \quad \text{ and } \quad \limsup_{n \to + \infty} \|u_n\|_2 >0
$$
Hence, in order to establish the attainability of $d_\beta$, one has to exclude both the concentration and vanishing behavior of maximizing sequences, namely
$$d_{\beta} > \max \left\{d_{cl}(\beta), \, d_{vl}(\beta)\right\}$$
where
$$d_{vl} (\beta):= \sup\left\{\lim_{n \to + \infty}\int_{\mathbb{R}^2}\left(e^{\beta u_n^2}-1\right)\,dx\,|\,\{u_n\}, \|u_n\|_S=1, \text{ is a vanishing sequence}\right\}$$
Actually $d_\beta$ is not attained for sufficiently small $\beta>0$, due to vanishing phenomena as established in \cite[Theorem 1.2]{I}.

In this section we prove that the supremum in inequality \eqref{ineq} is attained and, in contrast with the \textit{Ruf case}, we show that in the \textit{reduced case} vanishing does not play any role, regardless of $\beta$ in the optimal range.

\noindent
Let $\beta \in (0, 4\pi(1-\delta)^{-2})$ and let $u_n\in H^1(\mathbb{R}^2)$ be a maximizing sequence for inequality \eqref{ineq}, namely such that $\|\nabla u_n\|_2\leq 1-\delta$, $\| u_n\|_2\leq K$ and
\begin{equation*}
\sigma_\beta:=\sup_{\begin{array}{c}
 u\in H^1(\mathbb R^2),\\
\|\nabla u\|_2\leq 1-\delta,\\
\|u\|_2\leq K
\end{array}}\int_{\mathbb R^2}\left(e^{\beta u^2}-1\right)dx=\lim_{n\to\infty}\int_{\mathbb R^2}\left(e^{\beta u_n^2}-1\right)\,dx
\end{equation*}
Observe that by the P\'olya-Szeg\"{o} principle (see \cite{PS}) we may assume that there exists a radially decreasing maximizing sequence and thus we can take $u_n\in  H_{rad}^1(\mathbb{R}^2)$, the radial part of $H^1(\mathbb{R}^2)$, such that $u_n\rightharpoonup u$, as $n\to\infty$.
\begin{defn}\label{def_NVS}
Let $K>0$, $\delta\in[0,1)$ and $u_n\in H^1(\mathbb{R}^2)$ such that $\|\nabla u_n\|_2\leq 1-\delta$, $\|u_n\|_2=K$ and
$u_n\rightharpoonup u$, as $n\to\infty$. We say that $\{u_n\}$ is a normalized vanishing sequence if $u=0$ and
$$
\lim_{n\to\infty}\|\nabla u_n\|_2=0
$$
\end{defn}
\begin{lem}\label{P_lem}
Let $u_n\in H^1(\mathbb{R}^2)$ be a  sequence  such that $\|\nabla u_n\|_2\leq 1-\delta$ and $\| u_n\|_2\leq K$. Let $P(s):=e^{\beta s^2}-1-\beta s^2$, then
$$
\lim_{n\to\infty}\int_{\mathbb{R}^2}P(u_n)\,dx=\int_{\mathbb{R}^2}P(u)\,dx$$
\end{lem}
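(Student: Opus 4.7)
The plan is to apply Vitali's convergence theorem to $\{P(u_n)\}\subset L^1(\mathbb R^2)$. Since $\{u_n\}$ is bounded in $H^1(\mathbb R^2)$, up to a subsequence $u_n\rightharpoonup u$ in $H^1$. The context of the preceding paragraph permits one to take $u_n$ radially symmetric and decreasing, so Strauss-type radial compactness in $\mathbb R^2$ yields strong convergence $u_n\to u$ in $L^p(\mathbb R^2)$ for every $p\in(2,\infty)$ and, after a further subsequence, pointwise almost everywhere. In particular $P(u_n)\to P(u)$ a.e., and the statement will follow from tightness and equi-integrability of $\{P(u_n)\}$ together with a standard subsequence argument (any subsequence admits a sub-subsequence converging to the same limit $\int P(u)$).

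For tightness at infinity, I use the sharp pointwise bound $|u_n(x)|\leq K/(\sqrt{\pi}\,|x|)$ valid for radially decreasing $u_n$ with $\|u_n\|_2\leq K$, together with the Taylor estimate $P(s)\leq \frac{\beta^2}{2}s^4 e^{\beta s^2}$. For $|x|\geq R$ with $R$ large enough that $\beta K^2/(\pi R^2)\leq 1$, this gives $P(u_n(x))\leq CK^4|x|^{-4}$, hence $\int_{|x|\geq R}P(u_n)\,dx\leq C'R^{-2}\to 0$ uniformly in $n$.

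Equi-integrability is the heart of the argument and is precisely where the subcritical uniform bound proved in the previous subsection (Theorem~\ref{MT}) enters. Since $\beta(1-\delta)^2<4\pi$, I can fix $q>1$ with $q\beta(1-\delta)^2<4\pi$, and Theorem~\ref{MT} applied with exponent $q\beta$ yields
\[
\sup_n\int_{\mathbb R^2}\bigl(e^{q\beta u_n^2}-1\bigr)\,dx\leq C.
\]
Writing $e^{\beta u_n^2}=1+(e^{\beta u_n^2}-1)$ inside the estimate $P(u_n)\leq\tfrac{\beta^2}{2}u_n^4 e^{\beta u_n^2}$, applying Hölder's inequality with conjugate exponents $q,q'$, and using the elementary inequality $(e^{\beta s^2}-1)^q\leq e^{q\beta s^2}-1$ (valid for $q\geq 1$) gives, for any measurable $E\subset\mathbb R^2$,
\[
\int_E P(u_n)\,dx \;\leq\; C\int_E u_n^4\,dx+C\Bigl(\int_E u_n^{4q'}\,dx\Bigr)^{1/q'}.
\]
By Gagliardo--Nirenberg the sequence $\{u_n\}$ is bounded in $L^s(\mathbb R^2)$ for every $s\geq 2$, so both terms on the right vanish as $|E|\to 0$, uniformly in $n$. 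Vitali's theorem then delivers $P(u_n)\to P(u)$ in $L^1(\mathbb R^2)$, which is stronger than the claim. The main technical obstacle is obtaining uniform exponential integrability of $u_n$ at an exponent strictly larger than $\beta$; this is exactly what the strict gap $\beta(1-\delta)^2<4\pi$ and Theorem~\ref{MT} make available, and it is also the reason the analogous lemma would fail at the critical exponent $\beta=4\pi(1-\delta)^{-2}$.
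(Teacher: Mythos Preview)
Your argument is correct and takes a genuinely different route from the paper. The paper's proof is a two-line appeal to an external compactness theorem: it verifies the growth conditions
\[
\lim_{|s|\to\infty}\frac{s^{2}P(s)}{e^{4\pi(1-\delta)^{-2}s^{2}}}=0,
\qquad
\lim_{s\to 0}\frac{P(s)}{s^{2}}=0,
\]
and then invokes Theorem~1.5 of Ibrahim--Masmoudi--Nakanishi \cite{IMN}, which provides exactly the needed $L^{1}$ convergence under these hypotheses. In particular the paper explicitly says that the Strauss compactness lemma does \emph{not} apply because $\int(e^{4\pi(1-\delta)^{-2}u_n^{2}}-1)$ need not be bounded, and then sidesteps this by using the finer \cite{IMN} result.

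Your approach, by contrast, shows that a Strauss/Vitali-type argument \emph{does} work once one exploits the strict gap $\beta<4\pi(1-\delta)^{-2}$: choosing $q>1$ with $q\beta$ still subcritical gives the uniform exponential bound $\sup_n\int(e^{q\beta u_n^{2}}-1)<\infty$ (via the finiteness part of Theorem~\ref{MT}, which is established earlier from Adachi--Tanaka and hence introduces no circularity), and this suffices for equi-integrability. The tightness step via the radial decay $u_n^{\sharp}(r)\le K/(\sqrt\pi\,r)$ is clean and standard. What you gain is a self-contained proof that does not depend on the black box \cite{IMN}; what the paper's route gains is brevity and a formulation (through the growth conditions above) that makes transparent exactly which feature of $P$ is being used.
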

\begin{proof}
We can not apply the compactness lemma of Strauss (see \cite{BL}, Theorem A.I) since
$$\int_{\mathbb{R}^2}\left(e^{\frac{4\pi}{(1-\delta)^2}u_n^2}-1\right)\,dx$$
may fail to be bounded. However, we have the following
\begin{equation}\label{MH}
\lim_{|s|\to\infty}\frac{s^2P(s)}{e^{\frac{4\pi}{(1-\delta)^2}s^2}} =0\quad\text{and}\quad\lim_{s\to 0}\frac{P(s)}{s^2}=0
\end{equation}
and the claim follows directly from \cite{IMN}, in particular by \eqref{MH} hypothesis of Theorem 1.5 in \cite{IMN} are fulfilled.
\end{proof}
\noindent As a consequence we have
\begin{equation}\label{final}
\sigma_\beta= \int_{\mathbb{R}^2}\left(e^{\beta u^2}-1\right)\,dx+\beta\lim_{n\to\infty}\int_{\mathbb{R}^2}(u_n^2-u^2)\,dx
\end{equation}
In particular if $u=0$, which is the case of normalized maximizing vanishing sequences of Definition \ref{def_NVS}, one has
\begin{equation}\label{sigma_v_below}
\sigma_\beta\leq\beta K^2
\end{equation}
\begin{defn}
The vanishing level can be defined as follows
$$\sigma_{vl}(\beta):=\sup\left\{\lim_{n \to + \infty}\int_{\mathbb{R}^2}\left(e^{\beta u_n^2}-1\right)\,dx\,|\,\{u_n\} \text{ is a radially decreasing NVS }\right\}$$
\end{defn}
\begin{rem}
A normalized vanishing sequence (NVS for brevity) can be constructed for example as follows: let $\phi\in\mathcal{C}_c^\infty(\mathbb{R}^2)$ be such that $\|\nabla\phi\|_2\leq 1$ and $\|\phi\|_2=K$. Then the scaling $u_n(x):=\lambda_n\phi(\lambda_n x)$ yields a NVS provided $\lambda_n\to 0$.
\end{rem}
\begin{lem}\label{lev_lem} The following hold:
\begin{itemize}
\item[i)] $\sigma_{vl}(\beta)=\beta K^2$;
\item[ii)] $\sigma_\beta > \sigma_{vl}$
\end{itemize}
\end{lem}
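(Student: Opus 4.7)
The identity $\sigma_{vl}(\beta)=\beta K^2$ should drop out of Lemma \ref{P_lem} applied directly to any NVS. Indeed, a radially decreasing NVS $\{u_n\}$ satisfies by definition $\|\nabla u_n\|_2\leq 1-\delta$ and $\|u_n\|_2=K$, which are exactly the hypotheses of Lemma \ref{P_lem}, and its weak limit is $u=0$. Hence
$$
\lim_{n\to\infty}\int_{\R^2}\bigl(e^{\beta u_n^2}-1\bigr)\,dx-\beta\lim_{n\to\infty}\int_{\R^2}u_n^2\,dx=\lim_{n\to\infty}\int_{\R^2}P(u_n)\,dx=\int_{\R^2}P(0)\,dx=0,
$$
and since $\|u_n\|_2=K$ for every $n$, every admissible NVS yields the same limit $\beta K^2$. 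This shows $\sigma_{vl}(\beta)=\beta K^2$, provided the family of NVS is nonempty, which is exactly the content of the construction flagged in the Remark preceding the lemma: starting from a radially decreasing $\phi\in C_c^\infty(\R^2)$ with $\|\phi\|_2=K$ and rescaling so that $\|\nabla\phi\|_2\leq 1-\delta$, the dilation $u_n(x):=\lambda_n\phi(\lambda_n x)$ with $\lambda_n\to 0^+$ gives a radially decreasing NVS.

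\textbf{Plan for (ii).} This is a one-line strict inequality, obtained by exhibiting a single admissible competitor that beats $\beta K^2$. I would pick any radially decreasing $v\in C_c^\infty(\R^2)\setminus\{0\}$, then rescale via the two-parameter family $v(x)=c\phi(\mu x)$ to achieve simultaneously $\|\nabla v\|_2\leq 1-\delta$ and $\|v\|_2=K$. Since $\beta>0$, the Taylor expansion of the exponential gives $P(s)=e^{\beta s^2}-1-\beta s^2>0$ for all $s\neq 0$, so $\int_{\R^2}P(v)\,dx>0$. Therefore
$$
\sigma_\beta\geq\int_{\R^2}\bigl(e^{\beta v^2}-1\bigr)\,dx=\beta\|v\|_2^2+\int_{\R^2}P(v)\,dx>\beta K^2=\sigma_{vl}(\beta),
$$
which proves (ii).

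\textbf{Expected obstacle.} There is no substantial obstacle once Lemma \ref{P_lem} is in hand; both parts are quite direct. The two small points I would be careful about in the writeup are: (a) the fact that Definition \ref{def_NVS} imposes $\|u_n\|_2=K$ with equality, which is what makes $\lim\|u_n\|_2^2=K^2$ automatic and turns the upper bound \eqref{sigma_v_below} into the equality needed for (i); and (b) that in (ii) the strict positivity of $P$ on $\R\setminus\{0\}$ produces the strict inequality directly, without the need to estimate any concentration or energy level, in contrast to the more delicate comparison $d_{\beta}>d_{cl}(\beta)$ arising in the Ruf case.
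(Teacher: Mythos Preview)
Your proposal is correct and follows essentially the same approach as the paper: for (i) you apply Lemma \ref{P_lem} to an NVS with weak limit $0$ to get $\lim\int P(u_n)=0$ and then use $\|u_n\|_2=K$, and for (ii) you pick an admissible test function and exploit the strict positivity of $P$ (the paper writes this as the Taylor series $\sum_{j\geq 1}\beta^j\|v\|_{2j}^{2j}/j!>\beta\|v\|_2^2$, which is the same computation). Your additional remarks about nonemptiness of the NVS family and the rescaling construction of the competitor $v$ are correct and make explicit what the paper leaves implicit.
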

\begin{proof}
Let $\{u_n\}$ be a NVS. Then we have
\begin{multline*}
\lim_{n\to\infty}\int_{\mathbb{R}^2}\left(e^{\beta u_n^2}-1\right)\,dx=\lim_{n\to\infty}\int_{\mathbb{R}^2}\left(e^{\beta u_n^2}-1-\beta u_n^2\right)\,dx+ \beta\lim_{n\to\infty}\|u_n\|_2^2=\beta K^2
\end{multline*}
which is straightforward from Lemma \ref{P_lem} since by assumption the weak limit $u=0$; this proves the first claim.

\noindent Now let $v\in H^1(\mathbb{R}^2)$ be such that $\|\nabla v\|_2\leq 1-\delta$ and $\|v\|_2=K$, then
\begin{eqnarray*}
\int_{\mathbb{R}^2}\left(e^{\beta v^2}-1\right)\,dx &=& \sum_{j=1}^{+\infty}\frac{\beta^j}{j!}\int_{\mathbb{R}^2}v^{2j}\,dx\\
& > & \beta\|v\|_2^2= \beta K^2
\end{eqnarray*}
and hence $\sigma_\beta>\sigma_{vl}(\beta)$ as claimed.
\end{proof}
\noindent From Lemma \ref{lev_lem} and \eqref{sigma_v_below} we can exclude the case in which the weak limit of maximizing sequences is zero. The proof of Theorem \ref{MT} will be complete if we show that $u\neq 0$ is an extremal function. For this purpose let
$$\tau^2:=\lim_{n\to\infty}\frac{\|u_n\|_2^2}{\|u\|_2^2}\geq 1$$
and let $u_\tau(x):=u\left(\frac{x}{\tau}\right)$ so that
\begin{eqnarray*}
\|\nabla u_\tau\|_2 &=& \|\nabla u\|_2\leq\lim_{n\to\infty}\|\nabla u_n\|_2\leq 1-\delta\\
\|u_\tau\|_2^2 &=& \tau^2\|u\|_2^2= \lim_{n\to\infty}\|u_n\|_2^2\leq K^2
\end{eqnarray*}
Let us evaluate
\begin{eqnarray*}
\sigma_\beta &\geq& \int_{\mathbb{R}^2}\left(e^{\beta {u_\tau}^2}-1\right)\,dx\\
&=& \tau^2 \int_{\mathbb{R}^2}\left(e^{\beta {u}^2}-1\right)\,dx\\
&=&  \int_{\mathbb{R}^2}\left(e^{\beta {u}^2}-1\right)\,dx +(\tau^2-1) \int_{\mathbb{R}^2}\left(e^{\beta {u}^2}-1-\beta u^2\right)\,dx+(\tau^2-1)\beta \int_{\mathbb{R}^2}u^2\,dx\\
&=& \sigma_\beta-\beta \lim_{n\to\infty}  \int_{\mathbb{R}^2}\left(u_n^2-u^2\right)\,dx\\
&&\qquad +(\tau^2-1) \int_{\mathbb{R}^2}\left(e^{\beta {u}^2}-1-\beta u^2\right)\,dx+\beta\lim_{n\to\infty}(\|u_n\|_2^2-\|u\|_2^2)\\
&=& \sigma_\beta+(\tau^2-1)\int_{\mathbb{R}^2}\left(e^{\beta {u}^2}-1-\beta u^2\right)\,dx
\end{eqnarray*}
thus necessarily $\tau=1$ and hence $u_n\to u$ in $L^2(\mathbb{R}^2)$ which by \eqref{final} yields an extremal function for $\sigma_\beta$; this completes the proof of Theorem \ref{MT}.

\begin{rem}
Let us stress here that in the \textit{reduced case}, inequality \eqref{ineq} is always attained and this is a consequence of the fact that extremals live above the vanishing critical level independently of $\beta$, in striking contrast with the \textit{Ruf case} in which the vanishing level may stand above the level of concentration, see Remark 2.2 in \cite{I}.
\end{rem}

\begin{rem}
During the preparation of the present paper, we have been informed that Ishiwata, Nakamura and Wadade have recently proved in \cite{INW} the existence of extremal functions for the Adachi-Tanaka inequality in any dimensions. However, here we give a different proof, in fact our argument is an application of the results in \cite{IMN} by Ibrahim, Masmoudi and Nakanishi concerning the two dimensional case. We point out that our argument for the $2$-dimensional case can be adapted to cover the $N$-dimensional case with $N \geq 3$ applying the results recently obtained in \cite{MSN}.
\end{rem}

\section{An improved Adachi-Tanaka inequality: proof of Theorem \ref{ReqAT}}
\label{SectionImprAT}

We first show that Ruf's inequality \eqref{Ri} implies an improved
version of the Adachi-Tanaka inequality \eqref{ATeq}. The
improvement concerns the constant $C(\beta)$ appearing in the
right hand side of \eqref{ATeq} when $\beta \to 4\pi$,
which is the relevant case in order to  cover the
critical exponent in Moser's type results, see Remark \ref{RMKconstRHS} below. More precisely, a deeper
inspection of the Adachi-Tanaka result in \cite{AT}, shows that the constant appearing on the right hand side of \eqref{ATeq} is given by
\begin{equation}\label{ATconst-eps}
C_{\varepsilon}(\beta):=4\pi e^{\frac{\beta}{4\pi}}\max\left\{\frac{\beta
}{4\pi},\frac{e^{\frac{\beta}{4\pi\varepsilon}}}
{ 1 - \frac \beta {4 \pi} (1 + \varepsilon)}\right\}
\end{equation}
where $\varepsilon$ is a parameter that can be
arbitrarily fixed in the interval $(0,
4\pi/\beta-1)$. Hence, the Adachi-Tanaka's constant
actually depends also on the auxiliary parameter $\varepsilon$;
the best choice for this parameter, which minimizes the constant
\eqref{ATconst-eps}, can be easily computed: for any given
$\beta$, it is attained for $\varepsilon=1-\beta/4\pi$
(which is less then $4\pi/\beta-1$). Even with this choice of $\varepsilon$, the constant $C_\epsilon(\beta)$ in \eqref{ATconst-eps} grows exponentially fast as $\beta \to 4 \pi$; in particular, we have (up to constants) the following asymptotic behavior as $\beta \to 4 \pi$
\begin{equation}\label{ATconstasym}
C_{1- \frac \beta {4\pi}} (\beta) \sim C(\beta):= \frac{e^{1/(1-\frac{\beta}{4\pi})}}
{\left(1-\frac{\beta}{4\pi}\right)^2}
\end{equation} Clearly
$$
\displaystyle C(\beta)>\frac{C}{1- \beta/4 \pi},\quad\text{ as }\quad  \beta\to
4\pi
$$ for any fixed $C>0$ and note that the right hand side of the above inequality is
the constant appearing in \eqref{ATimp}.

Let
$\beta \in (0,4 \pi)$ and let $u \in H^1(\R^2)$ be such that $\|
\nabla u \|_2 \leq 1$. Set
$$v:= \sqrt{\frac \beta{4 \pi}} u,$$
so that, from scaling \eqref{scal1} we have
$$J_\beta (u)=J_{4 \pi }(v)$$
Consider also the rescaled function
$$v_{\mu}(x):=v({\mu}x) \qquad \text{ with } {\mu}:=\left( \frac{\|v\|_2^2}{1- \beta/4 \pi} \right)^{\frac 1 2} $$
Then
$$\|v_{\mu}\|_S=\| \nabla v \|_2^2 + \frac 1{{\mu}^2} \|v\|_2^2 = \frac \beta{4 \pi} \|\nabla u\|_2^2 + 1 - \frac \beta{4 \pi} \leq 1$$
and hence, as a consenquence of \eqref{Ri},
$$J_\beta (u) = J_{4 \pi} (v) = {\mu}^2 J_{4 \pi}(v_\mu) \leq {\mu}^2 d_{4 \pi} = \frac{\|v\|_2^2}{1- \beta/4 \pi} d_{4 \pi}$$
and inequality \eqref{ATimp} follows.

Now assume that \eqref{ATimp} holds and let us derive Ruf's inequality \eqref{Ri}. Let $\beta< 4\pi$ and let $u \in H^1(\R^2) \setminus \{0\}$ be such that $\|\nabla u\|_2^2+\|u\|_2^2\leq 1$. Set $\theta:=\|\nabla u\|_2^2 \in (0,1)$ so that $\|u\|_2^2\leq 1-\theta$. We distinguish between two cases accordingly to $0<\theta\leq 1/2$ or $1/2<\theta<1$.

\medskip

\noindent Let $0 < \theta\leq 1/2$ and set $$\widetilde{u}:=\sqrt{2}u$$ so that $\|\nabla \widetilde{u}\|_2^2=2\|\nabla u\|_2^2=2\theta\leq 1$ and $\|\widetilde{u}\|_2^2=2\|u\|_2^2\leq 2$. We have
\begin{equation*}
\int_{\mathbb{R}^2}\left(e^{4\pi u^2}-1\right)\,dx=\int_{\mathbb{R}^2}\left(e^{2\pi \widetilde{u}^2}-1\right)\,dx\leq C\|\widetilde{u}\|_2^2\leq 2C
\end{equation*}
where we have actually used the subcritical Adachi-Tanaka inequality \eqref{ATeq}.

\noindent Let $1/2 < \theta <1$ and set $$u_\theta:=\frac{u}{\sqrt{\theta}}$$ so that $\|\nabla u_\theta\|_2^2=1$ and $\|u_\theta\|_2^2=\frac{\|u\|_2^2}{\theta}$. We have
\begin{equation*}
\int_{\mathbb{R}^2}\left(e^{4\pi u^2}-1\right)\,dx=\int_{\mathbb{R}^2}\left(e^{4\pi\theta u_\theta^2}-1\right)\,dx\leq \frac{d_{4\pi}}{1-\theta}\|u_\theta\|_2^2\leq \frac{d_{4\pi}}{1-\theta}\frac{\|u\|_2^2}{\theta}\leq 2d_{4\pi}
\end{equation*}
where we have used \eqref{ATimp}, with $\beta=4\pi\theta<4\pi$, and $\| u \|_2^2 \leq 1 - \theta$.

\begin{rem}
Notice that, the proof of the equivalence of \eqref{Ri} and \eqref{ATimp} deeply depends on the scaling properties \eqref{scal1} and \eqref{scal2} of the Trudinger-Moser functional. Moreover, by the argument above, it is not possible to deduce Ruf's inequality \eqref{Ri} from the Adachi-Tanaka inequality in its original form \eqref{ATeq}. In fact, if we use \eqref{ATeq} instead of the improved version \eqref{ATimp} then, in the case $\theta > 1/2$, we get
$$\int_{\R^2} (e^{4 \pi u^2} -1) \, dx \leq C_\varepsilon(4 \pi \theta) \: \frac{1- \theta} \theta$$
where
$$C_\varepsilon(4 \pi \theta):=4 \pi \max\left\{\theta \: e^{\theta},
\frac { e^{ \theta \left(\displaystyle  1 + \frac 1 \varepsilon \right)  } }  { 1 - \theta (1+ \varepsilon)  } \right\},
\quad 0<\varepsilon <\frac{1}{\theta}-1$$
as in \eqref{ATconst-eps}. Now, notice that
$\varepsilon=\varepsilon(\theta) \to 0$, as $\theta \to 1^-$ and
$$\sup_{\theta \in (\frac 1 2, 1)} C_\varepsilon (4 \pi \theta) \: \frac{1- \theta} \theta = + \infty$$
\end{rem}
\begin{rem}
\label{RMKconstRHS}
Surprisingly, even if the critical exponent $\beta = 4 \pi$ can not be reached in the Adachi-Tanaka inequality where it appears as a limiting endpoint, a refinement of the constant $C(\beta)$ allows to deduce quite directly the \emph{critical} Ruf inequality \eqref{Ri}. In this respect, the constant $C(\beta)$ appearing in the right hand side of \eqref{ATimp} plays a crucial role and, up to authors best knowledge, it seems this property has not been noticed before.
\end{rem}
\section{The zero mass case: proof of Theorem \ref{limiting}}

Let us briefly recall for the reader convenience the definition of decreasing rearrangement of a function. Let $u:\Omega\subset\mathbb{R}^n \to \mathbb R$ be a measurable function and let
$$
\mu_u(s)=\big|\{x\in \Omega: |u(x)|>s\}\big|, \quad s\geq 0
$$
be the distribution function of $u$. The {\emph{ monotone
decreasing rearrangement}} $u^{\ast}:[0, +\infty)\rightarrow [0,
+\infty]$ of $u$ is defined as the distribution function of
$\mu_{u}$, namely
$$
u^*(t):=\big|\{s\in [0, \infty):\mu_{u}(s)>t\}\big| = \sup \Big\{s>0:\big|\left\{x\in\mathbb
R^n : |u(x)|>s\right\}\big|>t\Big\},\:t\in (0,|\Omega|]
$$
whereas the {\emph{ spherically symmetric rearrangement}}
$u^{\sharp}$ of $u$ is defined as
$$
u^{\sharp}(x)=u^*(\omega_n |x|^n), \quad x\in \Omega^{\sharp}
$$
here $\Omega^{\sharp}$ is the open ball with center in the origin
which satisfies $|\Omega^{\sharp}|=|\Omega|$ and $\omega_n$ is the volume of the unit ball. Clearly, $u^*$ is a
nonnegative, non-increasing and right-continuous function on
$[0,\infty)$ and the \emph{maximal function} $u^{**}$ of the rearrangement $u^*$, defined by
$$u^{**}(t):= \frac 1 t \int_0^t u^*(s) \, ds$$
satisfies $u^* \leq u^{**}$; for basic properties on rearrangements we refer to \cite{PS,K,SK}.

Now we prove inequality \eqref{ZMine} which is a version of the following result by Alvino \cite{AA} in the case of bounded domains, extended to the whole space $\R^2$:
\begin{thm}[Alvino, 1977]
\label{AlvThm}
Let $\Omega \subset \R^2$ be a bounded domain, then the following inequality holds
\begin{equation}
\label{AlvIneq}
\sup_{0 < t \leq |\Omega|} \frac{u^* (t)}{\sqrt{\log \frac{|\Omega|} t}} \leq \frac 1{\sqrt{4 \pi}} \|\nabla u\|_2
\end{equation}
for any $u \in H_0^1(\Omega)$. Moreover, the constant appearing in \eqref{AlvIneq} is the best possible and it is attained when $\Omega$ is a ball.
\end{thm}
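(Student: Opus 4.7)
My plan is to reduce \eqref{AlvIneq} to a one-dimensional inequality for the decreasing rearrangement $u^*$ on $(0,|\Omega|]$, and then extract the pointwise bound by a Cauchy-Schwarz argument applied to a weighted Dirichlet energy. The constant $1/\sqrt{4\pi}$ ultimately traces back to the planar isoperimetric inequality $|\partial E|^2 \geq 4\pi|E|$, which is the one place where the dimension $n=2$ enters.

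\textbf{Main steps.} The first step is the weighted Dirichlet bound
$$\|\nabla u\|_2^2 \;\geq\; 4\pi\int_0^{|\Omega|} t\,[(u^*)'(t)]^2\,dt,$$
valid for every $u \in H_0^1(\Omega)$, with equality when $u$ coincides with its Schwarz symmetrization $u^{\sharp}$ on $\Omega^{\sharp}$. The standard derivation combines (i) the co-area formula $\|\nabla u\|_2^2 = \int_0^\infty \int_{\{|u|=s\}} |\nabla u|\,d\mathcal{H}^1\,ds$, (ii) Cauchy-Schwarz on each level set together with the identity $-\mu_u'(s) = \int_{\{|u|=s\}} |\nabla u|^{-1}\,d\mathcal{H}^1$, and (iii) the isoperimetric inequality applied to the superlevel sets $\{|u|>s\}$; one then passes from $\mu_u$ to $u^*$ by the change of variable $t=\mu_u(s)$. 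The second step is purely one-dimensional: since $u \in H_0^1(\Omega)$ forces $u^*(|\Omega|)=0$, for every $t \in (0,|\Omega|]$ the fundamental theorem of calculus and Cauchy-Schwarz give
$$u^*(t) = \int_t^{|\Omega|} \frac{1}{\sqrt{s}}\cdot\sqrt{s}\bigl(-(u^*)'(s)\bigr)\,ds \;\leq\; \Bigl(\log \tfrac{|\Omega|}{t}\Bigr)^{1/2}\Bigl(\int_t^{|\Omega|} s\,[(u^*)'(s)]^2\,ds\Bigr)^{1/2}.$$
Substituting the Dirichlet bound from the first step on the right, dividing by $\sqrt{\log(|\Omega|/t)}$, and taking the supremum over $t$ produces \eqref{AlvIneq}.

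\textbf{Sharpness and attainment on a ball.} For $\Omega = B_R$ with $|\Omega|=\pi R^2$, I would test the inequality against the truncated Moser family
$$v_\delta(x) := \frac{1}{\sqrt{2\pi}} \begin{cases} \log(R/|x|)/\sqrt{\log \delta}, & R/\delta \leq |x| \leq R, \\ \sqrt{\log\delta}, & |x|\leq R/\delta, \end{cases} \qquad \delta > 1.$$
A direct calculation gives $\|\nabla v_\delta\|_2 = 1$ and shows that the ratio $v_\delta^*(t)/\sqrt{\log(|\Omega|/t)}$ attains its maximum $1/\sqrt{4\pi}$ exactly at the plateau endpoint $t = \pi R^2/\delta^2$; hence each $v_\delta$ is an extremal, proving simultaneously that the constant is optimal and attained. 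These are precisely the functions that reappear as extremals in Theorem \ref{limiting}.

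\textbf{Main obstacle.} The most delicate point is the rigorous justification of the weighted Dirichlet bound for a general (not necessarily smooth) $u \in H_0^1(\Omega)$: one has to interpret $(u^*)'$ as the a.e. derivative of the locally absolutely continuous part of $u^*$, control the possible degeneracy set $\{|\nabla u|=0\}$ via Morse-Sard-type arguments, and pass carefully between the distribution function $\mu_u$ and its generalized inverse $u^*$. These computations are classical (going back to Talenti and Alvino) and can be imported from \cite{PS, K, SK}.
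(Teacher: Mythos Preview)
Your argument is correct. The paper does not itself prove Theorem~\ref{AlvThm} (it is quoted from \cite{AA}), but the proof given immediately afterward for the whole-space analogue \eqref{ZMine} is exactly your two-step scheme transcribed into the radial variable: fundamental theorem of calculus on $u^\sharp$, Cauchy--Schwarz with the weight $1/r$, and the P\'olya--Szeg\"{o} inequality (which packages your co-area/isoperimetric derivation of the weighted Dirichlet bound); the extremal family you test for sharpness on a ball is likewise the one the paper exhibits in Theorem~\ref{limiting}.
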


Let $u\in \mathcal{D}^{1,2}(\mathbb{R}^2)$, $R>0$ and $|x|\leq R$ then
\begin{eqnarray*}
u^\sharp(|x|)-u^\sharp(R) &=&\int_{|x|}^R -(u^\sharp)'(r)\,dr\\
&\leq & \frac{1}{\sqrt{2\pi}}\sqrt{2\pi \int_{|x|}^R [(u^\sharp)'(r)]^2 r\,dr}\sqrt{\int_{|x|}^R \frac{dr}{r}}\\
&\leq & \frac{1}{\sqrt{2\pi}}\|\nabla u^\sharp\|_2\sqrt{\log \frac{R}{|x|}}\\
&\leq & \frac{1}{\sqrt{2\pi}}\|\nabla u\|_2\sqrt{\log \frac{R}{|x|}}
\end{eqnarray*}
by the Polya-Szeg\"o inequality. Hence by the change of variables $t=\pi |x|^2$ and $T=\pi R^2$, we get
\begin{equation*} \label{eqp131}
u^*(t)-u^*(T)\leq \frac{1}{\sqrt{4\pi}}\|\nabla u\|_2\sqrt{\log \frac{T}{t}},\quad 0<t\leq T
\end{equation*}
and thus inequality \eqref{ZMine}.

\begin{rem}
\label{A->AT}
Inequality \eqref{ZMine} implies the Adachi-Tanaka inequality \eqref{ATeq}. This can be showed following the same arguments in the proof of \eqref{ATeq}, see \cite{AT}.
\end{rem}

The sharpness and attainability of the inequality \eqref{ZMine} in $\mathcal D^{1,2}(\R^2)$ can be deduced from Alvino's inequality in its original form, see Theorem \ref{AlvThm}. In fact, for any fixed $T>0$, if $R>0$ satisfies $T=\pi R^2$ (i.e. $T=|B_R|$ where $B_R$ is the ball of radius $R$ centered at the origin in $\R^2$) then
$$\sup_{u \in H_0^1(B_R)} \: \sup_{0< t \leq T} \frac{u^*(t)}{\sqrt{\log \frac T t}} \leq \sup_{u \in \mathcal D^{1,2}(\R^2)} \: \sup_{0 < t \leq T} \frac{u^*(t) - u^*(T)}{\sqrt{\log \frac T t}}$$
For the convenience of the reader, we show how one can reach the same conclusion by direct computations. For any fixed $T>0$, making the change of variables $T=\pi R^2$ and $t= \pi r^2$,
$$ \sup_{0 < t \leq T} \frac{u^*(t) - u^*(T)}{\sqrt{\log \frac T t}} = \sup_{0<r \leq R} \frac{u^\sharp(r) - u^\sharp(R)}{\sqrt{\log \frac {R^2}{r^2}}}$$
We consider the Moser sequence
\begin{equation}
\label{MoserSeq}
w_k(s):=
\begin{cases}
0\,, &  s<0
\\
\displaystyle{ \frac s{\sqrt k}} \,,& 0 \leq s \leq k \vspace{0.1cm}
\\
\sqrt k \,,& k < s
\end{cases}
\end{equation}
Fixed $R>0$, we define the sequence $\{u_k\}_k \subset  \mathcal D^{1,2} (\R^2)$ of radial non-increasing functions as follows
$$w_k(s)= \sqrt{4 \pi} \: u_k(R e^{-s/2}), \quad |x|^2=R^2e^{-s},$$
By construction,
$$\| \nabla u_k\|_2^2 = \int_{- \infty}^{+ \infty} [\dot{w} (s)]^2 \, ds =1$$
and
$$\sqrt{ 4 \pi} \: \sup_{0<r \leq R} \frac{u^\sharp(r) - u^\sharp(R)}{\sqrt{\log \frac {R^2}{r^2}}} = \sup_{0<s < + \infty} \frac{w_k(s)} {\sqrt s}$$
Now, it is easy to check that
$$\sup_{0<s < + \infty} \frac{w_k(s)} {\sqrt s} = 1 \quad \forall k \geq 1$$
and the sequence $\{u_k\}_k$ enables us to obtain at the same time the sharpness of inequality \eqref{ZMine} and that the best constant in \eqref{ZMine} is attained, for any fixed $T>0$, by the following functions
$$
u_{R,\delta}(x):= \frac 1 {\sqrt {2 \pi}}
\begin{cases}
0\,, & |x|>R \vspace{0.1cm}
\\
\displaystyle{ \frac{ \log \frac R r } {\sqrt{\log \delta}} } \,,&
\displaystyle{\frac R \delta \leq |x| \leq R} \vspace{0.1cm}
\\
\sqrt{\log \delta}\,, & \displaystyle{0 \leq |x| \leq \frac R \delta
}
\end{cases},\qquad \delta>1
$$
which are compactly supported on the ball $B_R$ of measure $T=|B_R|$.

\section{A Zygmund-type inequality in $H^1(\R^2)$: proof of Theorem \ref{main4}} \label{SecZygmund}
Inequality \eqref{limine} represents the embedding of $H^1(\R^2)$ into a Zygmund function space settled in unbounded domains. Indeed, in the case of bounded domains $\Omega \subset \R^2$, the Zygmund space $Z^{1/2}(\Omega)$ consists of all measurable functions $u: \, \Omega \to \R$ such that for some constant $\lambda=\lambda(u)>0$
$$\int_{\Omega} e^{\lambda u^2} \, dx < + \infty$$
we refer to \cite{BR,BS} for classical results on interpolation spaces. The above integral does not satisfy the properties of a norm, however the quantity
\begin{equation}
\label{ZygQNormDomains}
\| u\|_{Z^{1/2}(\Omega)}:= \sup_{t \in (0, |\Omega|]} \frac{u^*(t)}{ \sqrt{ 1+ \log \frac{|\Omega|} t}}
\end{equation}
defines a quasi-norm on $Z^{1/2}(\Omega)$ which is equivalent to a norm. In \cite{CRT2}, the authors investigate the embedding
$$H^1_0(\Omega) \lhookrightarrow  Z^{1/2}(\Omega),$$
proving that
$$\| u\|_{Z^{1/2}(\Omega)} \leq \frac 1{\sqrt{4 \pi}} \| \nabla u \|_2 \quad \forall u \in H_0^1(\Omega)$$
and the constant on the right hand side is sharp, that is it cannot be replaced by a smaller constant. In the same spirit of \cite{CRT,CRT2}, here we analyze the embedding of $H^1(\R^2)$ into a Zygmund space but which is now defined in the whole plane. To this aim, since the definition \eqref{ZygQNormDomains} of the quasi-norm in $Z^{1/2}(\Omega)$ depends crucially on the measure of the domain $\Omega$, we first need to introduce a quasi-norm which is domain independent and well-suited to treat the case of the whole space $\R^2$. Note that this is not a priori obvious and the authors could not find in the literature an explicit definition of quasi-norm in Zygmund spaces on domains with infinite measure. In order to overcome this difficulty, our strategy is the following. Let $Z^{1/2}(\R^2)$ be the space consisting of all measurable functions $u: \, \R^2 \to \R$ for which there exists a constant $\lambda = \lambda (u)>0$ such that
$$\int_{\R^2} (e^{\lambda u^2} -1) \, dx < + \infty$$
If we set
$$\| u\|_{Z^{1/2}}:= \sup_{T>0} \sup_{t \in (0,T]} \frac{u^*(t)} { \sqrt{  \frac {4 \pi}T + \log \frac T t}  }$$
then in analogy with the case of Zygmund spaces on bounded domains, we have the following characterization

\begin{prop}
\label{Zcharact}
A measurable function $u: \, \R^2 \to \R$ belongs to $Z^{1/2}(\R^2)$ if and only if $u \in L^2(\R^2)$ and $\| u\|_{Z^{1/2}} < + \infty$.
\end{prop}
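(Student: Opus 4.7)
My plan is to rewrite both conditions in terms of the decreasing rearrangement via the layer-cake identity
\begin{equation*}
\int_{\R^2}(e^{\lambda u^2}-1)\,dx \;=\; \int_0^{+\infty}(e^{\lambda u^*(t)^2}-1)\,dt,
\end{equation*}
and to exploit the fact that $\inf_{T\geq t}\bigl(\tfrac{4\pi}{T}+\log\tfrac{T}{t}\bigr)$ is attained at $T=4\pi$ when $t\leq 4\pi$, giving the value $1+\log(4\pi/t)$, and at $T=t$ when $t>4\pi$, giving the value $4\pi/t$. Thus, after squaring, the quantity $\|u\|_{Z^{1/2}}$ is comparable to the supremum of $u^*(t)^2$ weighted by $[1+\log(4\pi/t)]^{-1}$ on $(0,4\pi]$ and by $t/(4\pi)$ on $(4\pi,+\infty)$. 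This reduction is the structural observation that drives both implications.

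For the direction $(\Rightarrow)$, I would first note that $e^{\lambda s^2}-1\geq\lambda s^2$ immediately gives $u\in L^2(\R^2)$. Setting $C:=\int_{\R^2}(e^{\lambda u^2}-1)\,dx$ and using that $u^*$ is non-increasing, I get the pointwise estimate $t\,(e^{\lambda u^*(t)^2}-1)\leq C$, whence $u^*(t)^2\leq\lambda^{-1}\log(1+C/t)$. On $(0,4\pi]$ this is dominated by a constant times $1+\log(4\pi/t)$; on $(4\pi,+\infty)$ I would combine the sharper bound $t\,u^*(t)^2\leq\|u\|_2^2$ (coming from monotonicity of $u^*$ together with $u\in L^2$) with the weight $4\pi/t$, which together show $\|u\|_{Z^{1/2}}<+\infty$.

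For $(\Leftarrow)$, assume $u\in L^2(\R^2)$ and $M:=\|u\|_{Z^{1/2}}<+\infty$. Evaluating the quasi-norm at $T=4\pi$ and at $T=t$ respectively yields
\begin{equation*}
u^*(t)^2 \;\leq\; M^2\Bigl(1+\log\tfrac{4\pi}{t}\Bigr) \quad\text{on }(0,4\pi],\qquad u^*(t)^2 \;\leq\; \tfrac{4\pi M^2}{t} \quad\text{on }(4\pi,+\infty).
\end{equation*}
Choosing $\lambda>0$ so small that $\lambda M^2<1$, the first bound gives $e^{\lambda u^*(t)^2}\leq e^{\lambda M^2}(4\pi/t)^{\lambda M^2}$, which is integrable near the origin. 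On the tail I would use the elementary inequality $e^x-1\leq x\,e^x$ combined with the second bound to get $e^{\lambda u^*(t)^2}-1\leq\lambda\,e^{\lambda M^2}\,u^*(t)^2$, whose integral is controlled by $\lambda\,e^{\lambda M^2}\|u\|_2^2<+\infty$. Adding the two contributions yields $u\in Z^{1/2}(\R^2)$.

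The only genuinely delicate point is the identification of the optimal $T$ in each range of $t$, and this is exactly the step that forces the $L^2$-hypothesis to appear in the statement: the pure quasi-norm information controls $u^*(t)$ only by $\sqrt{4\pi M^2/t}$ at infinity, which is \emph{not} square-integrable, so an independent $L^2$-bound is unavoidable for passing from the weighted pointwise control on $u^*$ to a finite exponential integral. I do not expect any further obstacle beyond the bookkeeping needed to splice the two regimes at $t=4\pi$.
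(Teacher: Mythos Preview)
Your proposal is correct and follows essentially the same path as the paper: a Chebyshev-type pointwise bound $t(e^{\lambda u^*(t)^2}-1)\leq C$ from monotonicity of $u^*$, a split of the $t$-axis into a near-origin piece (handled via the quasi-norm with $\lambda$ chosen so that $\lambda M^2<1$) and a tail (handled via $\|u\|_2$). The only cosmetic difference is that you first minimise $4\pi/T+\log(T/t)$ over $T\geq t$ to rewrite the quasi-norm as an explicit weighted supremum and then split at $t=4\pi$, whereas the paper works with a generic cut $T$ and obtains the uniform bound $\lambda[u^*(t)]^2\leq K/T+\log(T/t)$ directly, without solving the inner optimisation.
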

\begin{proof}
Let us first prove the following implication
$$u \in Z^{1/2}(\R^2) \quad \Longrightarrow \quad u \in L^2(\R^2) \;  \text{ and } \; \| u\|_{Z^{1/2}} < + \infty$$
So let $u \in Z^{1/2}(R^2)$ and let $\lambda=\lambda (u) >0$ be such that
$$K=K(u):= \int_{\R^2} (e^{\lambda u^2} -1) \, dx < + \infty$$
Since $Z^{1/2}(\R^2) \subset L^2(\R^2)$, it is enough to show that $ \| u\|_{Z^{1/2}} < + \infty$. From Jensen's inequality, we deduce for any $t>0$
$$e^{\lambda [u^*(t)]^2} -1 \leq e^{\lambda [u^{**}(t)]^2} -1 \leq \frac 1 t \int_0^t (e^{\lambda [u^*(s)]^2} -1) \, ds \leq \frac K t$$
and hence
$$e^{\lambda [u^*(t)]^2} \leq \frac K t +1$$
In particular, for fixed $T>0$ and any $t \in (0, T]$, we may estimate
$$\lambda [u^*(t)]^2 \leq \log \biggl( \frac {K+t} t\biggr) \leq \log \biggl( \frac {K+T} t\biggr) = \log \biggl( \frac {K+T} T\biggr) + \log \frac T t \leq  \frac K T+ \log \frac T t$$
and
$$\| u\|_{Z^{1/2}} \leq \frac 1{\sqrt \lambda} \: \max \biggl\{ 1, \, \sqrt{\frac K {4 \pi}} \biggr \} < + \infty$$
The reverse implication follows using the same arguments introduced in \cite{R} to prove the Trudinger-Moser inequality \eqref{Ri} on $\R^2$. Let $u \in L^2(\R^2)$ be such that $c=c(u):=\| u\|_{Z^{1/2}}< + \infty$ then, for any $\lambda >0$, we have
$$\int_{\R^2}(e^{\lambda u^2(x)} -1) \, dx = \int_0^{+ \infty} (e^{\lambda [u^*(t)]^2} -1) \, dt = \int_0^T + \int_T^{+ \infty} (e^{\lambda [u^*(t)]^2} -1) \, dt$$
Boundedness of the integral on the half-line $[T, + \infty)$ follows from the $L^2$-integrability of $u$ together with the following well know inequality
$$[u^*(t)]^2 = \frac 1 t \int_0^t [u^*(t)]^2 \, ds \leq \frac 1 t \int_0^t [u^*(s)]^2 \, ds \leq \frac{\|u\|_2^2} t$$
namely
\begin{equation}\label{eqp132}
u^*(t)\leq \frac{\|u\|_2}{\sqrt{t}}
\end{equation}
In fact, for any $T>1$ and any $\lambda >0$
\begin{equation*}
\begin{split}
\int_T^{+ \infty} (e^{\lambda [u^*(t)]^2} -1) \, dt & \leq \lambda \| u\|_2^2 + \sum_{k=2}^{+ \infty}\frac{\lambda^k}{k!}  \int_T^{+ \infty}[u^*(t)]^{2k} \, dt
\\ &
\leq \lambda \| u\|_2^2 + \sum_{k=2}^{+ \infty}\frac{\lambda^k}{k!} \frac 1 {(k-1) T^{k-1}} \|u\|_2^{2k}
\\ &
\leq \lambda \| u\|_2^2 + \frac 1 T \sum_{k=2}^{+ \infty}\frac{\lambda^k}{k!} \|u\|_2^{2k} \leq e^{\lambda \|u\|_2^2} < + \infty
\end{split}
\end{equation*}
On the other hand, to estimate the integral on the interval $[0,T]$ we use the boundedness of the $Z^{1/2}$-quasi-norm of $u$. Indeed, for any $T>0$
$$\int_0^T (e^{\lambda [u^*(t)]^2} -1) \, dt \leq \int_0^T e^{\lambda c^2 \bigl( \frac {4 \pi} T + \log \frac  T t\bigr)} \, dt = e^{\lambda c^2 \frac{4 \pi} T} \frac T {1- \lambda c^2}$$
provided $1- \lambda c^2>0$.
\end{proof}

\begin{rem}
The appearence of an $L^2$-integrability condition in the above characterization of $Z^{1/2}(\R^2)$ is quite natural as in the passage from $H_0^1(\Omega)$, with $\Omega \subset \R^2$ bounded, to the space $H^1(\R^2)$. We also bring reader's attention to the paper \cite{CRT5} which concerns Moser type inequalities in Zygmund spaces without boundary conditions and which represented the first step toward the understanding of Zygmund class of functions set on unbounded domains.
\end{rem}

\subsection{Proof of Theorem \ref{main4}} The proof of inequality \eqref{limine} is based on \eqref{ZMine}. In fact, if $u\in H^1(\mathbb{R}^2)$ then, for $T>0$ arbitrarily fixed, by joining \eqref{ZMine} and \eqref{eqp132} we obtain
\begin{eqnarray*}
u^*(t)&=& u^*(t)-u^*(T)+u^*(T)\\
&\leq &  \frac{1}{\sqrt{4\pi}}\|\nabla u\|_2\sqrt{\log \frac{T}{t}}+\frac{\|u\|_2}{\sqrt{T}}
\end{eqnarray*}
By means of the following elementary inequality $ac + bd \leq \sqrt{a^2 + b^2} \: \sqrt{c^2 + d^2}$ which holds for any $a$, $b$, $c$, $d \geq 0$, we get
\begin{equation*}
u^*(t) \leq \frac{1}{\sqrt{4\pi}} \: \sqrt{\frac{4 \pi }{T}+\log \frac{T}{t}} \: \sqrt{\|\nabla u\|_2^2+\|u\|_2^2},\quad 0<t\leq T
\end{equation*}
from which inequality \eqref{limine} follows.

\begin{rem}
Notice that, in contrast with the case of bounded domains, where a
group invariant action seems not to be available, in the case of the whole plane the action of the dilation
$$
u(x) \Rightarrow u(\sqrt \mu \: x)
$$
produces a scale-invariant family of inequalities, all of them equivalent to \eqref{limine}. In fact, let $u\in H^1(\Real^2)$, $\mu>0$, and set
$$
u_{ \mu}(x):=u(\sqrt\mu \: x)
$$
Then $ \|\nabla
u_{\mu}\|_2=\|\nabla u\|_2,  \|u_{\mu}\|_2=\frac{1}{\sqrt \mu}\,
\|u\|_2$ and the decreasing rearrangement scales as
$$
u^{*}_{\mu}(t)=u^{*}(\mu \: t)
$$
so that inequality \eqref{limine} turns into the following
$$
\displaystyle\sup_{T>0}\:\sup_{t\in
(0,\frac{T}{\mu}]}\frac{u_{\mu}^*(t)}{\sqrt{\frac{4\pi}{T}+\log\frac{T}{\mu \: t}}}\leq
\frac{1}{\sqrt{4\pi}} \sqrt{\|\nabla
u_{\mu}\|_2^2+\mu\|u_{\mu}\|_2^2}
$$
or, equivalently
$$
\displaystyle\sup_{\tau>0}\:\sup_{t\in
(0,\tau]}\frac{u_{\mu}^*(t)}{\sqrt{\frac{4\pi}{\mu \: \tau}+\log\frac{\tau}{t}}}\leq
\frac{1}{\sqrt{4\pi}}\sqrt{\|\nabla
u_{\mu}\|_2^2+\mu\|u_{\mu}\|_2^2}
$$
Now, since $H^1(\Real^2)$ is invariant under scaling, we obtain
that inequality \eqref{limine} is actually equivalent to the
following one-parameter family of inequalities
\begin{equation*}\label{limine-mu}
  \displaystyle\sup_{\tau>0}\:\sup_{t\in
(0,\tau]}\frac{v^*(t)}{\sqrt{\frac{4\pi}{\mu \: \tau}+\log\frac{\tau}{t}}}\leq
\frac{1}{\sqrt{4\pi}}\sqrt{\|\nabla v\|_2^2+\mu\|v\|_2^2}
\end{equation*}
\end{rem}

Next we prove optimality of inequality \eqref{limine}.
Similarly to the case of the Alvino-type inequality \eqref{ZMine},
we need to prove the existence of a sequence $\{u_k\}_k
\subset H^1(\R^2)$ satisfying
\begin{equation}
\label{ZygIneqOpt} \frac{\sqrt{4 \pi}}{ \left(\| \nabla u_k\|_2^2 + \|
u_k\|_2^2 \right)^{\frac 1 2}} \: \sup_{R>0} \: \sup_{0<r \leq R} \: \frac{
u_k^\sharp(r)}{\sqrt{ \frac 4{R^2} + \log \frac{R^2}{r^2} }}
\longrightarrow 1\,, \quad \text{ as } \; k \to + \infty
\end{equation}
For this purpose we consider Moser's functions \eqref{MoserSeq} and we define
the sequence $\{u_k\}_k \subset H^1(\R^2)$ of radial
non-increasing functions by means of the following change of
variable
$$w_k(s)= \sqrt{4 \pi} u_k(e^{-s/2}), \quad |x|^2=e^{-s}$$
By construction, $\| \nabla u_k \|_2 =1$,
$$\| u_k \|_2^2 = \frac 1 4 \int_{- \infty}^{+ \infty} w_k^2(s) e^{-s} \, ds = \frac 1 2 \biggl(\frac 1 k - \frac 1{e^k} - \frac 1{k e^k}\biggr) \longrightarrow 0, \quad \text{ as } \; k \to + \infty$$
and
$$\sqrt{4 \pi} \: \sup_{R>0} \: \sup_{0<r \leq R} \: \frac{ u_k^\sharp(r)}{\sqrt{ \frac 4{R^2} + \log \frac{R^2}{r^2} }} = \sup_{R>0} \: \sup_{- \log R^2 \leq s < + \infty} \: \frac{w_k(s)}{\sqrt{ \frac 4{R^2}  + \log R^2 + s}}$$
For any $R>0$
$$\sup_{- \log R^2 \leq s < + \infty} \: \frac{w_k(s)}{\sqrt{ \frac 4{R^2}  + \log R^2 + s}} = f_k(R)$$
where
$$
f_k(R) :=
\begin{cases}
\displaystyle{  \frac{\sqrt k}{ \sqrt{ \frac 4 {R^2} + \log R^2 +k
} } } & R> e^{- k/2} \vspace{0.2cm}
\\
\displaystyle{ \frac { R \sqrt k}{2} } & 0<R \leq e^{- k/2}
\end{cases}
$$
Since
$$\sup_{e^{-k/2} < R} f_k(R) = f_k(2) = \frac {\sqrt k}{ \sqrt{1+ \log 4 + k} } \longrightarrow 1,  \quad \text{ as } \; k \to + \infty$$
and
$$\sup_{0 < R \leq e^{-k/2}} f_k(R) = f_k(e^{- k/2}) = \frac{\sqrt k}{2 e^{k/2}} \longrightarrow 0,  \quad \text{ as } \; k \to + \infty$$
we deduce \eqref{ZygIneqOpt}.
\subsection{From Zygmund-type inequality \eqref{limine} to the critical Ruf inequality}\label{Zyg->AT}
It is easy to see that \eqref{limine} yields \emph{subcritical} Ruf's inequality, i.e. \eqref{Ri} with $\beta < 4 \pi$, and consequently also the following \emph{subcritical} inequality
\begin{equation}
\label{RufModNormSub}
\sup_{\begin{array}{c}
 u\in H^1(\mathbb R^2),\\
\|\nabla u\|_2 + \|u\|_2 \leq 1
\end{array}}\int_{\mathbb R^2} (e^{\beta u^2} -1) \, dx < + \infty, \quad \text{ if } \beta < 4 \pi
\end{equation}
Moreover, the critical exponent for inequality \eqref{RufModNormSub} is still $\beta=4 \pi$. This can be deduced by exploiting the modified Moser sequence of functions
$$m_n(x)= (1- \|w_n\|_2) w_n(x):= \frac{1- \| w_n \|_2} {\sqrt {2 \pi}}\left\{%
\begin{array}{ll}
\displaystyle \frac 1{(\log n)^{1/2}} \: \log \frac 1{|x|}, & \displaystyle \frac 1 n < |x| \leq 1
\\
\displaystyle (\log n)^{1/2}, & \displaystyle 0 \leq |x| \leq \frac 1 n
\end{array}%
\right.
$$
(see \eqref{MosSeq1} for the definition of $w_n$).

\noindent
Inequality \eqref{limine} implies also the Adachi-Tanaka inequality \eqref{ATeq}, though actually we are going to show that \eqref{limine} yields an improved version of the Adachi-Tanaka inequality, in the spirit of \eqref{ATimp} and in turn \eqref{RufModNormSub} with $\beta=4\pi$.

\begin{prop}
\label{PropATquad}
There exists a constant $C>0$ such that for any $u \in H^1(\R^2)$ with $\| \nabla u \|_2 \leq 1$
\begin{equation}
\label{ATcub}
\int_{\R^2} \left( e^{\beta u^2} -1\right) \, dx \leq \frac C{(1- \frac \beta {4 \pi})^2} \: \|u\|_2^2 ,\quad  \beta < 4 \pi
\end{equation}
\end{prop}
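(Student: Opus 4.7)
The plan is to derive \eqref{ATcub} by pointwise controlling the decreasing rearrangement $u^*$ via the one-parameter scale-invariant family of Zygmund-type inequalities \eqref{limine}, combined with the elementary $L^2$-bound \eqref{eqp132}. Writing $\int_{\R^2}(e^{\beta u^2}-1)\,dx = \int_0^{+\infty}(e^{\beta [u^*(t)]^2}-1)\,dt$, I would split this integral at a threshold $T>0$ to be chosen.

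For the tail $\int_T^{+\infty}$, I plan to use \eqref{eqp132}, which gives $\beta[u^*(t)]^2 \leq \beta\|u\|_2^2/t$. To avoid the logarithmic divergence at $k=1$ in the Taylor expansion, I would separate the linear term,
\begin{equation*}
\int_T^{+\infty}\!(e^{\beta[u^*(t)]^2}-1)\,dt \leq \beta\int_T^{+\infty}\![u^*(t)]^2\,dt + \sum_{k\geq 2}\frac{\beta^k}{k!}\int_T^{+\infty}\![u^*(t)]^{2k}\,dt,
\end{equation*}
controlling the first term by $\beta\|u\|_2^2$ and the tail of the series by $(\|u\|_2^2/T)^{k-1}\|u\|_2^2$, which sums to $O(\beta\|u\|_2^2)$ as soon as $T$ is comparable to $\beta\|u\|_2^2$.

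For the main piece $\int_0^T$, I would apply the scale-invariant version of \eqref{limine} (displayed just before the proposition), which for $\|\nabla u\|_2\leq 1$ reads
\begin{equation*}
\beta[u^*(t)]^2 \leq \sigma\Bigl(\tfrac{4\pi}{\mu T} + \log\tfrac{T}{t}\Bigr),\qquad \sigma:=\tfrac{\beta}{4\pi}(1+\mu\|u\|_2^2),
\end{equation*}
with $\mu>0$ at our disposal. Then $e^{\beta[u^*(t)]^2}\leq e^{4\pi\sigma/(\mu T)}(T/t)^{\sigma}$, and integrating on $(0,T]$ yields a bound of the form
\begin{equation*}
\int_0^T e^{\beta[u^*(t)]^2}\,dt \leq \frac{T}{1-\sigma}\,e^{4\pi\sigma/(\mu T)},
\end{equation*}
provided $\sigma<1$. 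The crucial step is the joint choice of $\mu$ and $T$: I would set $\sigma = \tfrac12(1+\tfrac{\beta}{4\pi})$, so that $1-\sigma = \tfrac12(1-\tfrac{\beta}{4\pi})$, which pins down $\mu$ via the identity $\mu\|u\|_2^2 = \tfrac{\sigma}{\beta/4\pi}-1$, and then pick $T$ of order $\|u\|_2^2/(1-\beta/4\pi)$ (consistent with the tail estimate) so that the exponent $4\pi\sigma/(\mu T)$ is bounded by an absolute constant. With these choices the prefactor produces exactly the two powers of $(1-\beta/4\pi)^{-1}$: one from $1/(1-\sigma)$ and one from $T/\|u\|_2^2$, giving $\int_0^T \leq C(1-\beta/4\pi)^{-2}\|u\|_2^2$.

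The main obstacle, and the reason the denominator is quadratic rather than linear as in \eqref{ATimp}, is the trade-off between these two factors: making $\sigma$ closer to $1$ improves the Zygmund bound but worsens the divergence of $T/(1-\sigma)$, while the exponential $e^{4\pi\sigma/(\mu T)}$ must be kept under control by taking $T$ large, which forces $T\gg\|u\|_2^2$. The balance $\sigma=(1+\beta/4\pi)/2$ is what turns a potentially exponential blow-up as $\beta\to 4\pi$ into the polynomial rate $(1-\beta/4\pi)^{-2}$.
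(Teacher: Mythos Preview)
Your proposal is correct and follows essentially the same route as the paper. The only difference is in the bookkeeping of the scaling: the paper first normalizes $\|u\|_2=K$ via \eqref{scal2} (so that it can take $T=1$ throughout) and then optimizes over $K$, choosing $K^2=\tfrac14(1-\beta/4\pi)$, whereas you keep $\|u\|_2$ free and carry the scaling through the parameter $\mu$ in the scale-invariant form of \eqref{limine}; the two are equivalent under the substitution $\mu\|u\|_2^2=K^2$, and in both cases the two factors of $(1-\beta/4\pi)^{-1}$ arise from $1/(1-\sigma)$ and from the scaling (your $T/\|u\|_2^2$, the paper's extra $1/K^2$ in \eqref{ATscal}).
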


\noindent As we have seen in Section \ref{SectionImprAT} (see also Remark \ref{RMKconstRHS}) the constant appearing on the right hand side of Adachi-Tanaka type inequalities plays an important role. In fact, exploiting the linear growth in $(1- \beta/4 \pi)^{-1}$, as $\beta \to 4 \pi$ of the constant appearing in \eqref{ATimp}, one can directly deduce the critical Ruf inequality \eqref{Ri}. Note that the above inequality \eqref{ATcub} improves the classical Adachi-Tanaka inequality \eqref{ATeq} and the improvement concerns exactly the constant on the right hand side, which has a quadratic growth in $(1- \beta/4 \pi)^{-1}$, as $\beta \to 4 \pi$. Following the same arguments as in Section \ref{SectionImprAT}, it is easy to show that \eqref{ATcub} implies the \emph{critical} Ruf inequality with respect to the standard Sobolev norm, namely \eqref{RufModNorm}. Therefore, the proof of Theorem \ref{main4} will be complete once we will prove Proposition \ref{PropATquad}.

\noindent Fix $\beta \in (0, 4 \pi)$ and let
$$\sigma_{\beta, K}:=
\sup_{\begin{array}{c}
\| \nabla u \|_2 \leq 1,\\
\|u\|_2 = K
\end{array}}\int_{\mathbb R^2} (e^{\beta u^2} -1) \, dx
$$
with $K>0$. First, let us take advantage of the scaling property \eqref{scal2} of the Trudinger-Moser functional and note that
\begin{equation}
\label{ATscal}
\sup_{\begin{array}{c}
 u\in H^1(\mathbb R^2) \setminus \{0\},\\
\|\nabla u\|_2\leq 1\\
\end{array}}
\frac 1{\|u\|_2^2}\int_{\mathbb R^2} \left(e^{\beta u^2}-1\right)dx =  \frac 1{K^2} \sigma_{\beta, K}
\end{equation}
The idea is now to deduce \eqref{ATcub} from \eqref{limine} by choosing properly $K>0$ in \eqref{ATscal} in order to gain the uniform
bound stated in \eqref{ATcub}.

\noindent Let $u\in H^1(\Real^2)$ with
$\|\nabla u\|_2\leq 1$ and $\|u\|_2=K$. By \eqref{limine},
\begin{eqnarray*}
 \beta [u^*(t)]^2 &\leq & \frac{\beta}{4\pi}
\left(\frac{4\pi}{T}+\log\frac{T}{t}\right)
\left(\|\nabla u\|_2^2+\right\|u\|_2^2) \\
&\leq& \left(\frac{4\pi}{T}+\log\frac{T}{t}\right)
\left(1+K^2\right)\frac{\beta}{4\pi},  \qquad \hbox{for any }
T>0, \quad t\in (0, T]
\end{eqnarray*}
Let us now choose $K>0$ such that
\begin{equation}
\label{Kchoice}
(1+K^2) \frac \beta {4 \pi} \leq 1-K^2 \quad \text{ and } \quad K<1
\end{equation}
With this choice of $K>0$, we obtain for any $T>0$
\begin{eqnarray*}
\int_0^{T} \left(e^{\beta [u^*(t)]^2}-1 \right) \, dt&\leq&
  \int_0^{T}e^{\beta[u^*(t)]^2} \, dt \leq  e^{\frac {4 \pi} T \: (1-K^2)} \int_0^{T} \Bigl( \frac T t\Bigr)^{1-K^2} \, dt
\\
&=&  \frac T {K^2} \: e^{\frac {4 \pi} T \: (1-K^2)} \leq \frac T {K^2} e^{\frac {4 \pi} T}
\end{eqnarray*}
and in particular, for $T=1$ we get
\begin{equation}
\label{01est}
\int_0^{1} \left(e^{\beta [u^*(t)]^2}-1 \right) \, dt \leq  \frac 1 {K^2} \: e^{4 \pi}
\end{equation}
To estimate the integral on $(1, +\infty)$, we use  \eqref{eqp132}, which yields
$$[u^*(1)]^2 \leq \|u\|_2^2 = K^2$$
and recalling that $0<K<1$, we have
\begin{equation}\label{integ-tau-infty}
\begin{split}
  \int_{1}^{+\infty}\left(e^{\beta [u^*(t)]^2}-1\right ) \, dt & \leq  \int_{1}^{+\infty} \beta [u^*(t)]^2 e^{\beta [u^*(t)]^2} \, dt
  \\ &\leq \beta e^{\beta K^2} \|u\|_2^2\leq 4 \pi e^{4 \pi}
\end{split}
\end{equation}
Combine \eqref{01est} and \eqref{integ-tau-infty} to obtain
\begin{eqnarray*}
  \int_{\Real^2} \left(e^{\beta u^2}-1 \right) \, dx &=&\int_0^{\infty} \left( e^{\beta
  [u^*(t)]^2}-1 \right) \, dt\\
&\leq& \frac 1 {K^2} \: e^{4 \pi} + 4 \pi e^{4 \pi} \leq \frac {e^{4 \pi}(1+ 4 \pi)} {K^2}
\end{eqnarray*}
where again we used the fact $0<K<1$.

\noindent Summarizing, we proved that \eqref{limine} implies
$$\sigma_{\beta, K} \leq  \frac {e^{4 \pi}(1+ 4 \pi)} {K^2}$$
provided $K>0$ satisfies \eqref{Kchoice} and, according to \eqref{ATscal},
$$
\sup_{\begin{array}{c}
 u\in H^1(\mathbb R^2) \setminus \{0\},\\
\|\nabla u\|_2\leq 1\\
\end{array}}
\frac 1{\|u\|_2^2}\int_{\mathbb R^2} \left(e^{\beta u^2}-1\right)dx =  \frac 1{K^2} \sigma_{\beta, K} \leq \frac {e^{4 \pi}(1+ 4 \pi)}{K^4}
$$
Let us now focus on the choice of $K>0$. It is clear that if the value
$$K:= \sqrt{\frac 1 4 \Bigl( 1 - \frac \beta {4 \pi}\Bigr)}$$
satisfies \eqref{Kchoice} then the proof of \eqref{ATcub} is complete. This is in fact the case, since for any $\beta < 4 \pi$ we have that $0< K < 1$ and
$$(1+K^2) \frac \beta {4 \pi} = (1+ K^2) \Bigl[\:  1 - \Bigl( 1 - \frac \beta {4 \pi}\Bigr) \:  \Bigr] = (1+K^2)(1 - 4 K^2) < 1 - 3K^2$$

\section{Final remarks}
\label{SecFinalRemarks}
\begin{rem} \label{RmkScal1}
Due to the scaling property \eqref{scal2} of the Trudinger-Moser functional \eqref{TMfunc}, the bound on the $L^2$-norm appearing in \eqref{ineq} does not affect the optimal range of the exponent, which depends only on the bound on the Dirichlet norm. This phenomenon can also be seen in inequality \eqref{Ri}. In fact, as observed in \cite{AY}, a careful inspection of \cite{R} shows that the Trudinger-Moser inequality \eqref{Ri} is still valid if we replace the standard Sobolev norm, i.e.
$$\|u\|_S^2:= \|\nabla u\|_2^2 + \: \|u\|_2^2,$$
with the equivalent norm
$$\|u\|_{S, \tau}^2:= \|\nabla u \|_2^2 + \: \tau \|u\|_2^2$$
where $\tau>0$. Therefore, the following inequality holds
\begin{equation*}
d_{\beta, \tau}:=\sup_{\begin{array}{c}
 u\in H^1(\mathbb R^2),\\
\|u\|_{S,\tau}\leq 1
\end{array}}\int_{\mathbb R^2}\left(e^{\beta
u^2}-1\right)~dx\leq C(\beta, \tau)<\infty, \quad \forall \beta \in [0, 4 \pi]
\end{equation*}
for any fixed $\tau>0$. Let
\begin{equation*}
d_\beta:= \sup_{\begin{array}{c}
 u\in H^1(\mathbb R^2),\\
\| u\|_S^2 \leq 1
\end{array}}\int_{\mathbb R^2}\left(e^{\beta u^2}-1\right)dx
\end{equation*}
be the supremum of inequality \eqref{Ri}, then the relation between $d_{\beta, \tau}$ and  $d_\beta$ can also be deduced from the scaling property \eqref{scal2} of the Trudinger-Moser functional \eqref{TMfunc} and reads as follows
$$\tau d_{\beta, \tau}= d_{\beta}, \quad \forall\, \tau>0$$
In fact, if we let $u_\tau(x):=u(\sqrt \tau x)$ then $\|u_\tau\|_{S,\tau}= \|u\|_S$ and
$$J_\beta(u) = \tau J_{\beta}(u_\tau)$$
\end{rem}

\begin{rem}
The nature of the constraint appearing in \eqref{Ti}, \eqref{ATeq} and \eqref{ineq}, which essentially involves only the Dirichlet norm $\| \nabla \cdot \|_2$, naturally suggests to investigate reasonable embeddings of the limiting homogeneous space $\mathcal D^{1,2}(\R^2)$. As pointed out in the Introduction, any kind of Trudinger-Moser type inequality \emph{cannot} hold to be true in the whole space $\mathcal D^{1,2}(\R^2)$, since it would imply that some exponential growth is allowed in contradiction with \eqref{D12emb}. However, if we consider subspaces of $\mathcal D^{1,2}(\R^2)$ of the form $\mathcal D^{1,2}(\R^2) \cap L^p(\R^2)$ with $p \geq 1$, then for any $\alpha \in (0, 4\pi)$
\begin{equation}
\label{D12AT}
\int_{\R^2} \phi_p(\alpha u^2) \leq C_{\alpha} \|u\|_p^p \quad \forall u \in \mathcal D^{1,2} (\R^2) \cap L^p(\R^2) \text{ with } \| \nabla u \|_2 \leq 1
\end{equation}
as shown in \cite[Corollary 2.1]{KW} in the more general framework of fractional homogeneous Sobolev spaces (see also \cite{BCLS}). Here $\phi_p$ represents the modified exponential function
$$\phi_p(t):= \sum_{\begin{array}{c}
j \in \mathbb N,\\
j \geq p/2
\end{array}}
\frac{t^j}{j!}
$$
In view of the Adachi-Tanaka type inequality \eqref{D12AT} in $\mathcal D^{1,2}(\R^2) \cap L^p(\R^2)$, it is natural to look for a version of the Trudinger-Moser inequality \eqref{Ri} in the same subspaces of $\mathcal D^{1,2}(\R^2)$. Up to our knowledge, no inequality of the form
\begin{equation}
\label{D12+Lp}
\sup_{\begin{array}{c}
 u\in \mathcal D^{1,2}(\mathbb R^2) \cap L^p(\R^2),\\
\|u\|_{\mathcal D^{1,2} \cap L^p} \leq 1
\end{array}}
\int_{\mathbb R^2} \phi_p(4 \pi u^2) \,dx \leq C_p
\end{equation}
with $p \geq 1$ and
$$\|u\|_{\mathcal D^{1,2} \cap L^p}:= \|\nabla u \|_2 + \|u\|_p \quad ( \text{or } \; \|u\|_{\mathcal D^{1,2} \cap L^p}^2:= \| \nabla u\|_2^2 + \|u\|_p^2 \:)$$
appears in the literature. However, we point out that exploiting the same arguments as developed by Lam and Lu in \cite{LL} (see also \cite{LLfree} for the more general case of Sobolev spaces involving higher order derivatives), it is easy to show that \eqref{D12+Lp} holds true in $D^{1,2}(\R^2) \cap L^p(\R^2)$ with $p \geq 1$.
\end{rem}

\begin{rem} It remains open the problem wether inequality \eqref{limine} turns out to be attained as in the Ruf case or if the loss of invariance properties under a group action may prevent attainability as observed in a closely related situation in \cite{CRT3}.
\end{rem}

\bibliographystyle{amsplain}

\end{document}